\documentclass[12pt]{amsart}
\usepackage[all]{xy}
\usepackage{amsfonts, amsthm, amssymb, amscd, amstext, amsmath}
\usepackage{url, xr}
\usepackage{bbm}
\usepackage{stmaryrd}
\usepackage[mathscr]{euscript}
\usepackage{mathrsfs}
\usepackage{dsfont}
\usepackage{textcomp}
\usepackage{enumitem}

\usepackage[all]{xy}
\newtheorem{defn}{Definition}[section]

\newtheorem{prop}[defn]{Proposition}
\newtheorem{thm}[defn]{Theorem}
\newtheorem{cor}[defn]{Corollary}
\newtheorem{rmk}[defn]{Remark}

% BLACKBOARD BOLD: 

%\def\B{{\mathbb B}} 
\def\C{{\mathbb C}} 
\def\P{{\mathbb P}} 
\def\Q{{\mathbb Q}}

%\def\U{{\mathbb U}} 

%\def\X{{\mathbb X}} 
%\def\Y{{\mathbb Y}} 

% CALLIGRAPHIC CHARACTERS: 

\def\CC{{\mathcal C}}

\def\FF{{\mathcal F}} 
 
\def\HH{{\mathcal H}} 

 %These have been corrected june 20-1991
 %These have been corrected june 20-1991

\def\OO{{\mathcal O}}

\def\XX{{\mathcal X}} 
 
\def\ZZ{{\mathcal Z}} 

% LEFT AND RIGHT ARROWS WITH TEXT, LENGTH OF ARROW SELFADAPTING-
% MODIFIED

\def\mapright#1{\mathop{\vbox{\ialign{
                ##\crcr
    ${\scriptstyle\hfil\;\;#1\;\;\hfil}$\crcr
 \noalign{\kern-1pt\nointerlineskip}
    \rightarrowfill\crcr}}\;}}

\def\mapleft#1{\mathop{\vbox{\ialign{
                ##\crcr
    ${\scriptstyle\hfil\;\;#1\;\;\hfil}$\crcr
    \noalign{\kern-1pt\nointerlineskip}
    \leftarrowfill\crcr}}\;}}

%UP AND DOWN ARROWS  new

\def\mapdown#1{\Big\downarrow
         \rlap{$\vcenter{\hbox{$\scriptstyle#1$}}$}}

\def\codim{\mathop{\rm codim}\nolimits}
\def\dim{\mathop{\rm dim}\nolimits}
\def\coker{\mathop{\rm coker}\nolimits}
\def\Gr{\mathop{\rm Gr}\nolimits}
\def\Hom{\mathop{\rm Hom}\nolimits}
\def\im{\mathop{\rm im}\nolimits}
\def\max{\mathop{\rm max}\nolimits}
\def\IC{\mathop{\rm IC}\nolimits}
\def\Perv{\mathop{\rm Perv}\nolimits}
\def\Sing{\mathop{\rm Sing}\nolimits}
\def\Supp{\mathop{\rm Supp}\nolimits}

\begin{document}
\title{Cohomology of complete intersections of quadrics}
\author{J. Nagel}	
\address{Institut de Math\'ematiques de Bourgogne, UMR 5584, 9 Avenue Alain Savary, BP 47870, 21078 Dijon, France}
\begin{abstract}
We show that the variable cohomology of a regular quadric bundle arising from a linear system of quadrics can be identified with the intersection cohomology of a double covering. As a consequence we show that the middle cohomology of a general smooth complete intersection of four quadrics in an odd-dimensional projective space is isomorphic to the cohomology $H^3(\tilde Z_0)$ of a resolution of singularities of a double solid. 	
\end{abstract}	

\dedicatory{To the memory of Alberto Collino}

\maketitle

\section{Introduction}

The geometry of complete intersections of quadrics in complex projective space have been studied extensively. Deligne's formula for the Hodge numbers of complete intersections in projective space implies that if $$X=V(Q_0,\ldots,Q_r)\subset\P^{n+1}$$ is a smooth complete intersection of quadrics, the level
$$
\ell = \{\max|p-q|, h^{p,q}(X)=0\}
$$
of the Hodge structure on the middle cohomology $H^{n-r}(X,\Q)$ equals $r$ if $n$ is even, and $r-1$ if $n$ is odd. Hence the generalised Hodge conjecture predicts that there should exist a smooth projective variety $Y$ of dimension $r$ or $r-1$ (depending on the parity of $n$)  and a correspondence $\Gamma$ from $Y$ to $X$ such that 
$$
\Gamma_*:H^{r-i}(Y,\Q)\to H^{n-r}(X,\Q)
$$
is surjective if $n\equiv i (2)$, $i\in\{0,1\}$. It suffices to prove the surjectivity of the induced map to the {\em variable cohomology} $H^*_{\rm var}(X)$, the cokernel of the restriction map from $H^*(\P^{n+1})$ to $H^*(X)$. 

In the case of quadrics there is a natural correspondence available. Let us describe it first in case $n=2m$ is even. A smooth quadric $Q\subset\P^{2m+1}$ contains two families of $m$-planes. If we choose a smooth quadric $Q_0$ containing $X$ and intersect the difference $\Lambda_t'-\Lambda_t''$ of two $m$-planes from the different families with $X$, we obtain a family of algebraic cycles $\{\ZZ_t\}$ of dimension $m-r$ on $X$. The base of this family is a double covering $Z_0$ of $\P^r$ that is ramified over the subset $\Delta\subset\P^r$ that parametrises singular quadrics in the linear system. We obtain a map (the cylinder homomorphism) from $H_r(Z_0)$ to $H_m(X)$ by mapping the class $[\gamma]$ of a topological $r$-cycle on $Z_0$ to the class of the topological $m$-cycle on $X$ swept out by the cycles $\{\ZZ_t\}_{t\in\gamma}$. If $X$ is smooth, we obtain a map $\Gamma_{0,*}:H^r_{\rm var}(Z_0)\to H^{2m-r}_{\rm var}(X)$ by Poincar\'e duality and passage to variable cohomology. Here $H^*_{\rm var}(Z_0)$ is the cokernel of the pullback map $H^*(\P^r)\to H^*(Z_0)$ coming from the double covering.

In the case $n=2m-1$ there is a similar construction, but in this case we have to start with a nodal quadric containing $X$ to obtain the two families of $m$-planes, and we obtain a family of algebraic cycles parametrised by a variety $Z_1$ of dimension $r-1$, a double covering of the discriminant $\Delta$, and a correspondence
$$
\Gamma_{1,*}:H^{r-1}_{\rm var}(Z_1)\to H^{2m-r-1}_{\rm var}(X).
$$

The expected codimension of the singular locus of the discriminant $\Delta$ is three. Hence the double coverings $Z_0$ and $Z_1$ are smooth if we consider general linear sytems of quadrics of dimension $r\le 2$ (pencils and nets). In these cases, the correspondences $\Gamma_{i,*}$ have been studied in detail \cite{Beauville,Mukai,O'Grady,Reid,Tyurin}. In all of these cases $\Gamma_{i,*}$ is an isomorphism (with $\Q$-coefficients).\footnote{Beauville's result even holds with integer coefficients.}

For linear systems of quadrics of dimension $r\ge 3$ the varieties $Z_i$ become singular. In this case the cohomology of $Z_i$ carries a mixed Hodge structure with an increasing weight filtration $W_{\bullet}$ and the subspace $W_{r-i-1}H^{r-i}(Z_i,\Q)$ belongs to the kernel of $\Gamma_{i,*}$, so we cannot expect injectivity of the correspondence. Terasoma \cite{Terasoma} proved generic surjectivity of the correspondences $\Gamma_{i,*}$ using a degeneration argument. In my unpublished habilitation thesis \cite{HDR} I simplified this method and gave precise conditions on surjectivity and injectivity based on the behaviour of the Leray spectral sequence for the associated quadric bundle $f:\XX\to\P^r$. 
	
The goal of this paper is to undertake a systematic study of the behaviour of the correspondences $\Gamma_{i,*}$ using perverse sheaves and intersection cohomology. We define  correspondences 
$$
\Gamma_{i,\#}:IH^{r-i}(Z_i)\to IH^{2m-r-i}(X) = H^{2m-r-i}(X)
$$
on the intersection cohomology groups and show that 
\vfill\eject

{\bf Theorem.}
\begin{enumerate}
\item[a)] There is an induced map 
$$
\Gamma_{i,\#}:IH^{r-i}_{\rm var}(Z_i)\to H^{2m-r-i}_{\rm var}(X).
$$
This map is an isomorphism for $n\equiv i(2)$.
\item[b)] Let $\sigma:\tilde Z_i\to Z_i$ be a desingularisation of $Z_i$. For all $k$ and $r$  there exist a correspondence $\tilde\Gamma_{i,*}:H^k(\tilde Z_i)\to H^{k-2m-2r}(X)$ and a commutative diagram
$$
\xymatrix{H^k(Z_i) \ar[r]^{\omega} \ar[dr]^{\Gamma_{i,*}} & IH^k(Z_i) \ar[r]^{\sigma^{\#}} \ar[d]^{\Gamma_{i,\#}} & H^k(\tilde Z_i) \ar[dl]^{\tilde\Gamma_{i,*}} \\
& H^{k+2m-2r}(X). &
}
$$
\end{enumerate}

The two main ingredients of the proof are the {\em Radon transformation}, which gives the precise form of the decomposition theorem for the quadric bundle $f:\XX\to\P^r$ associated to $X$ and a weak functoriality result for intersection cohomology \cite{BBFGK} that enables us to construct the correspondences $\Gamma_{i,\#}$ and to show the commutativity of the diagram.

The first interesting case where our methods apply is $r=3$ (webs of quadrics). We study the case of complete intersections of four quadrics in odd-dimensional projective space in detail. 
In this case $Z_0$ is a double solid (a double covering of $\P^3$ ramified along a nodal surface). There are three ways to replace the mixed Hodge structure on $H^3(Z_0)$ by a pure Hodge structure of weight three: we can either take the weight graded part $\Gr^3_W H^3(Z_0)$, the intersection cohomology group $IH^3(Z_0)$ or the cohomology of the desingularisation $\tilde Z_0\to Z_0$ obtained by blowing up the nodes. We show that all three are isomorphic; in particular, we obtain an isomorphism 
$$
\tilde\Gamma_{0,*}:H^3(\tilde Z_0)\to H^{2m-3}(X)
$$
that identifies the middle cohomology of $X$ with the cohomology of a smooth projective threefold. this result can be seen as an analogue of the classical results for $r=1$ and $r=2$ mentioned before.\footnote{It follows from the results of Addington \cite{Addington} that $H^{2m-3}(X)$ is isomorphic to the cohomology $H^3(\hat Z_0)$ of a small resolution of $Z_0$. But $\hat Z_0$ is not a K\"ahler manifold.}   

\medskip	
{\bf Notation and conventions.} Homology and cohomology are taken with $\Q$-coefficients (unless stated otherwise). If $X$ is a complex algebraic variety we write $d_X = \dim_{\C}X$.	
	
\medskip{\bf Acknowledgments.} It is a pleasure to thank Nick Addington, Damien M\'egy and Chris Peters for helpful conversations. This work was supported by ANR HQDIAG (contract ANR-21-CE40-0015). The IMB receives support from the EIPHI Graduate School (contract ANR-17-EURE-0002).

\section{The correspondence on cohomology}

Let $X=V(Q_0,\ldots,Q_r)\subset\P^{n+1}$ be a smooth complete intersection of quadrics. We denote by 
$$
\XX = \{(x,\lambda)\in\P^{n+1}\times\P^r, \sum_i \lambda_i Q_i(x)=0 \}  \mapright{f} \P^r
$$
the associated quadric bundle over $\P^r$. Note that the {\em variable cohomology groups}
\begin{eqnarray*}
	H^{n+r}_{\rm var}(\XX,\Q) & = & \coker(H^{n+r}(\P^{n+1}\times\P^r,\Q)\to H^{n+r}(\XX,\Q)) \\
	H^{n-r}_{\rm var}(X,\Q)) & = & \coker(H^{n-r}(\P^{n+1},\Q)\to H^{n-r}(X,\Q)).
\end{eqnarray*}
are related by an isomorphism of Hodge structures
$H^{n+r}_{\rm var}(\XX,\Q)\cong H^{n-r}_{\rm var}(X,\Q)(-r)$; cf. \cite{Terasoma}.

The quadric bundle $\XX\to\P^r$ is obtained by pulling back the universal family of quadrics $\XX_{\rm univ}\to\P^N = \P H^0(\P^N,\OO_{\P}(2))$. 
Recall that the corank of a quadric $s$ is the corank $n+2-{\rm rank}(A_s)$ of the associated symmetric matrix. Write
$
\Delta_i^{\rm univ} = \{[s]\in\P^N| {\rm corank}(A_s)\ge i\}.
$
Then
\begin{itemize}
	\item $\Sing(\Delta_i^{\rm univ}) = \Delta_{i+1}^{\rm univ}$
	\item $\codim(\Delta_i^{\rm univ}) = {{i+1}\choose 2}$
\end{itemize}
so we get a natural stratification of the base of the universal family by locally closed subsets $U_i^{\rm univ} = \Delta_i^{\rm univ}\setminus\Delta_{i+1}^{\rm univ}$.

\begin{defn}
\label{def:regular}	
Let $S\subset\P^N$ be a smooth quasi--projective subvariety.	
We say that a quadric bundle $f:\XX\to S$ obtained by pullback from the universal family to $S$ is {\em regular} if the total space $\XX$ is smooth and if $S$ is transversal to the stratification by corank, i.e., if
\begin{itemize}
	\item $U_i^{\rm univ}\cap S$ is smooth for all $i$
	\item $\codim_S(\Delta_i^{\rm univ}\cap S) = {{i+1}\choose 2}$ for all $i$.
\end{itemize}
 \end{defn}

%$$
%n = \dim(X) = N-r-1.
%$$
%The associated quadric bundle is
%$$
%\XX = \{(x,\lambda)\in\P^N\times\P^r|\sum_i \lambda_i Q_i(x)=0\}.
%$$
%with projections $f:\XX\to\P^r$, $g:\XX\to\P^N$. The well--known Cayley %trick shows that the degree $r$ correspondence 
%$$
%\begin{array}{ccc}
%g^{-1}(X)=X\times\P^r& \mapright{} &\XX \\
%\mapdown{}	& & \\
%\P^N & &
%\end{array}
%$$
%induces an isomorphism of Hodge structures
%$$
%H^n_{\rm var}(X,\Q)(-r)\mapright{\sim}H^{n+2r}_{\rm var}(\XX,\Q)
%$$
%where 
%\begin{eqnarray*}
%H^n_{\rm var}(X,\Q) & = & \coker(H^n(\P^N,\Q)\to H^n(X,\Q) \\	
%H^{n+2r}_{\rm var}(\XX,\Q) & = & \coker(H^{n+2r}(\P^N\times\P^r,\Q)\to %H^{n+2r}(\XX,\Q))	
%\end{eqnarray*}
%denote the variable cohomology. 

\subsection{Even relative dimension}

Let $f:\XX\to S$ be a regular quadric bundle of even relative dimension $2m$. Let $h:F_m(\XX/S)\to S$ be the relative Fano scheme of $m$--planes in the fibers of $\XX\to S$, and let 
$$
\xymatrix{
F_m(\XX_S/S) \ar[r]^{h} \ar[d]^{h'} &  S \\
 Z_0 \ar[ru]^{\pi} &
}
$$
be the Stein factorisation.	For technical reasons it is convenient to choose a subscheme $F\subset F_m(\XX_S/S)$ such that $h'|_F:F\to Z_0$ is a finite morphism. We then consider the pullback
$\Gamma_0 = \FF\times_{F_m(\XX_S/S)}F$ 
of the the universal family $\FF$ of $m$--planes over $F_m(\XX_S/S)$ to $F$. In this way we obtain a commutative diagram
$$
\xymatrix{
\Gamma_0 \ar[r] \ar[d] & \FF \ar[r] \ar[d] &  \XX \ar[d] \\
F \ar[r] \ar[dr] & F_m(\XX/S) \ar[d] \ar[r] & S \\
& Z_0. \ar[ru] &
}
$$
We denote the composed maps from $\Gamma_0$ to $Z_0$ and $\XX_S$ by $p$ and $q$. With these notations, we have a commutative square
$$
\begin{array}{ccc}
\Gamma_0 & \mapright{q} & \XX \\
\mapdown{p} & & \mapdown{f} \\
Z_0 & \mapright{\pi} & S.
\end{array}
$$
The correspondence $\Gamma_0$ induces a map 
$$
\Gamma_{0,*}:H^k(Z_0,\Q)\to H^{k+2r}(\XX,\Q)
$$
obtained by taking the composition of the maps
$$
H^k(Z) \mapright{p^*} H^k(\Gamma_0) \mapright{\cap[\Gamma_0]} H_{2d_{\Gamma_0}-k}(\Gamma) \mapright{q_*} H_{2d_{\Gamma_0}-k}(\XX) = H^{k+2m}(\XX)
$$
where we used that $\Gamma_0$ and $\XX$ are compact, $\XX$ is smooth and $d_{\XX}-d_{\Gamma}=m$.

The map $\Gamma_{0,*}$ is induced by a morphism in the derived category of bounded complexes of sheaves of $\Q$--vector spaces. Recall that if $f:X\to Y$ is a proper morphism of algebraic varieties and $D_X$, $D_Y$ denote the dualising sheaves on $X$ and $Y$
%We denote the dualising sheaf of $X$ by $D_X = a_X^{!}\Q$, $a_X:X\to{\rm %pt}$. 
there exist maps 
\begin{eqnarray*}	
\alpha_f:\Q_Y\to Rf_*\Q_X \\
\beta_f:Rf_*D_X\to D_Y
\end{eqnarray*}
such that $\alpha_f$ induces the pullback map on cohomology
$$
f^*:H^k(Y,\Q)\to H^k(X,\Q) 
$$
and $\beta_f$ induces the pushforward map on Borel--Moore homology
$$
f_*:H_k^{\rm BM}(X,\Q) = H^{-k}(X,D_X)\to H_k^{\rm BM}(Y,\Q). 	
$$
Applying this to our situation, we obtain a map 
$\alpha_p:\Q_Z\to Rp_*\Q_{\Gamma_0}$
that induces
$$
a:R\pi_*\Q_Z\to R\pi_*Rp_*\Q_{\Gamma_0} = Rf_*Rq_*\Q_{\Gamma_0}
$$
and a map
$\beta_q:Rq_*D_{\Gamma}\to D_{\XX}$.
If we compose this with the map
$\Q_{\Gamma}\to D_{\Gamma}[-2d_{\Gamma}]$
that corresponds to the fundamental class $[\Gamma]\in H_{2d_{\Gamma}}^{\rm BM}(\Gamma,\Q) = \Hom(\Q_{\Gamma},D_{\Gamma}[-2d_{\Gamma}])$
we obtain
$$
b:Rq_*\Q_{\Gamma}\to D_{\XX}[-2d_{\Gamma}].
$$
Hence we obtain a morphism 
$$
\Gamma_{0,*}:R\pi_*\Q_{\Gamma}\mapright{a}Rf_*q_*\Q_{\Gamma}\mapright{Rf_*(b)}Rf_*D_{\XX}[-2d_{\Gamma}]=Rf_*\Q_{\XX}[2m].
$$ 
in the derived category that induces the action of the correspondence $\Gamma_0$ on cohomology.

We also obtain a homomorphism of sheaves
$$
\Gamma_{0,*}:\pi_*\Q\to R^{2m}f_*\Q.
$$
We can extract two rank one local systems from this situation.
The map $\pi:Z_0\to\P^r$ is a double covering that is ramified along the discriminant locus $\Delta\subset S$ of $f$. Write $U=S\setminus\Delta$. The unramified double covering $\pi_U:\pi^{-1}(U)\to U$ induces a map 
$\Q_U\to\pi_{U,*}\Q$
whose cokernel is the rank one local system $L_0\cong(\pi_{U,*}\Q)^{-}$ (the anti-invariant part of the action of the involution of the double covering). The commutative diagram
$$
\xymatrix{
\XX \ar[r] \ar[d]^{f} & \P^{n+1}\times S \ar[dl]^{\varphi} \\
S & 
}
$$
gives a restriction map $R^{2m}\varphi_*\Q|_U\to R^{2m}f_*\Q|_U$
whose cokernel is the rank one local system $M_0 = (R^{2m}f_*\Q|_U)_{\rm var}$.

\begin{prop}
\label{prop:iso loc even}	
There exists a Zariski open subset $V\subset U$ such that the restriction of $\Gamma_0$ to $V$ induces an isomorphism of rank one local systems
$$
\Gamma_{0,*}|_V:(L_0)|_V\mapright{\sim} M_0|_V
$$
\end{prop}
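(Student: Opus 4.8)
The plan is to reduce the statement to a local computation on a general one-dimensional slice, where everything becomes classical. First I would restrict the whole picture over a general line $\ell\subset\P^r$, or rather over a general point of $U$ and a small disk transverse to $\Delta$; by the regularity hypothesis the discriminant $\Delta$ is smooth of codimension one along a dense open subset, and over such a neighbourhood $\XX\to S$ looks like a one-parameter degeneration of smooth quadrics of dimension $2m$ acquiring a single node. Since both $L_0$ and $M_0$ are local systems on $U$, it suffices (by the identity theorem for maps of local systems, a map being an isomorphism on a nonempty open set of an irreducible base once the ranks agree) to check that $\Gamma_{0,*}$ is nonzero — equivalently an isomorphism, both sides having rank one — on the punctured disk $\Delta^*$ around a general point of the smooth locus of $\Delta$.

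Next I would identify the two local systems with the monodromy representations attached to that degeneration. The local system $L_0 = (\pi_{U,*}\Q)^-$ is, by definition of the double cover $Z_0\to\P^r$ ramified along $\Delta$, the sign representation: monodromy around a general branch point acts by $-1$. On the other side, $M_0 = (R^{2m}f_*\Q|_U)_{\rm var}$ is the variable part of the middle cohomology of the fibers; by the Picard–Lefschetz formula for a node in a family of even-dimensional quadrics, the local monodromy acts on the vanishing cycle $\delta\in H^{2m}(\XX_t)_{\rm var}$ by $\delta\mapsto \delta - (\delta\cdot\delta)\,\delta$, and since a $2m$-dimensional quadric has vanishing-cycle self-intersection $\pm 2$, this is again multiplication by $-1$. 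So both sides are the sign local system, and in particular their ranks and monodromies match; what remains is to see that the map between them is not identically zero.

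For the nonvanishing I would run the construction of $\Gamma_{0,*}$ explicitly over the disk. Over the punctured disk the Fano scheme $F_m(\XX_t/\{t\})$ has two connected components (the two rulings of a smooth $2m$-dimensional quadric), so $Z_0\to S$ really is the connected double cover there, and the correspondence $\Gamma_0$ restricted to a fiber $\XX_t$ sends the generator of $(L_0)_t$ — the difference $[\Lambda'_t]-[\Lambda''_t]$ of the two ruling classes — to the class in $H^{2m}(\XX_t)$ of that difference of $m$-planes. The key point is that for a smooth even-dimensional quadric the difference of the two maximal linear subspaces is exactly (up to a nonzero scalar) the primitive/vanishing generator of $H^{2m}_{\rm var}$, so this image is a nonzero element of $M_0$. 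Since the specialisation maps are compatible with the correspondence, $\Gamma_{0,*}$ is nonzero on one fiber, hence an isomorphism of local systems on a nonempty Zariski open $V\subset U$.

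The main obstacle is the bookkeeping in the third step: making the cycle-theoretic identity "$[\Lambda'_t]-[\Lambda''_t]$ generates the variable cohomology of the quadric" interact correctly with the sheaf-level construction of $\Gamma_{0,*}$ through the Stein factorisation and the auxiliary finite cover $F\to Z_0$, so that one really computes the stalk map of $\Gamma_{0,*}:\pi_*\Q\to R^{2m}f_*\Q$ and not some twisted or multiplied version. Everything else — the codimension-one smoothness of $\Delta$ over $V$, the two-component structure of the Fano scheme over the smooth locus, the Picard–Lefschetz sign — is standard, but the normalization of the correspondence deserves care.
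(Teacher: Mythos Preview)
Your proposal is correct, and its essential step --- the fibrewise computation in your step~3, showing that the correspondence sends the generator $[s']-[s'']$ of $(L_0)_s$ to a nonzero multiple of $[\Lambda'_t]-[\Lambda''_t]$ in $(M_0)_s$ --- is exactly the paper's argument. The paper, however, dispenses entirely with your steps~1 and~2: since $L_0$ and $M_0$ are already known to be rank-one local systems before the proposition is stated, there is no need to compute their monodromies via Picard--Lefschetz or to restrict to a transverse disk; one simply checks that the stalk map at a single general point $s\in V$ is nonzero, and a nonzero map of rank-one local systems on a connected base is automatically an isomorphism. The paper also resolves the normalisation issue you flag as the main obstacle: over the open set $V$ where $h'|_F:F\to Z_0$ is unramified of degree $d$, the fibre of $p$ over $s'$ consists of $d$ distinct $m$-planes all in the same ruling, so the stalk map sends $[s']-[s'']$ to exactly $d(\lambda'-\lambda'')$. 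Your detour through monodromy is valid and gives an independent confirmation that $L_0\cong M_0$ abstractly, but it is not needed for the argument.
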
 

\begin{proof}
Recall that in order to define $\Gamma$ we chose $F\subset F_m(\XX/\P^r)$ such that $h'|_F:F\to Z$ is a finite morphism, say of degree $d$. 	
There exists a Zariski open subset $V\subset U$ such that $h'$ is unramified of degree $d$ over $\pi^{-1}(V)$. Choose $s\in V$ and write $\pi^{-1}(s) = \{s',s''\}$. The fiber of $p\circ h'$ over $s'$ consists of $m$--planes $\{\Lambda'_1,\ldots,\Lambda'_d\}$ that all have the same cohomology class $\lambda'$, and over $s''$ we get $m$--planes belonging to the second family that all have the same cohomology class $\lambda''$.
The induced map on the stalks
$$
(\pi_*\Q)_s = \Q[s']\oplus\Q[s'']\to H^{2m}(\XX_s,\Q)=\Q[\lambda']\oplus\Q[\lambda''] 
$$
maps the generator $[s']-[s'']$ of $L_s$ to $d$ times the generator $\lambda'-\lambda''$ of $M_s$. 
\end{proof}
 
\subsection{Odd relative dimension}

If $n=2m-1$ is odd, we have a similar situation, but this time we have to consider quadrics that have one ordinary double point. A singular quadric $Q$ with one node $p$ is a cone with vertex $p$ over a smooth quadric  $Q'\subset\P^{2m-1}$, hence it contains two families of $m$--planes (obtained by taking the span of the ($m-1$)--planes on $Q'$ with $p$).  

Write $\XX_{\Delta} = f^{-1}(\Delta)\subset\XX$. The map from the relative Fano scheme $F_m(\XX_{\Delta}/\Delta)\to\Delta$ factors through a double covering $\pi:Z\to\Delta$ that is ramified over the locus of quadrics of corank 2. As before we obtain a diagram
$$
\begin{array}{ccccccc}
\Gamma & \mapright{} & F_m(\XX_{\Delta}/\Delta) & \mapright{} & \XX_{\Delta} & \mapright{} & \XX \\
\mapdown{} & & \mapdown{} & & \mapdown{} & & \mapdown{f}	\\
F & \mapright{} & Z & \mapright{\pi} & \Delta & \mapright{i} & \P^r. 
\end{array} 	
$$
With these notations, we obtain a commutative diagram
$$
\begin{array}{ccccc}
\Gamma_1 & \mapright{q_{\Delta}} & \XX_{\Delta} & \mapright{\iota} & \XX \\
\mapdown{p} & & \mapdown{f_{\Delta}} & & \\
Z_1 & \mapright{\pi} & \Delta. & &
\end{array}
$$
Write $q = \iota\circ q_{\Delta}:\Gamma_1\to\XX$. Then $\Gamma_1$ is a degree $m$ correspondence from $\Delta$ to $\XX$, and by similar arguments one obtains the analogue of Proposition \ref{prop:iso loc even} in the case of odd relative dimension.
\begin{prop}
\label{prop: iso loc odd}	
There exists a Zariski open subset $V\subset\Delta$ such that $\Gamma_{1,*}|_V$ induces an isomorphism between the rank one local systems $L_1 = (\pi_*\Q|_V)^-$ and $M_1 = (R^{2m}f_*\Q|_V)_{\rm var}$. 	
\end{prop}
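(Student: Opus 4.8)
The plan is to transcribe the proof of Proposition~\ref{prop:iso loc even}, the only new input being the cohomology of the singular fibres of $f$ over the generic point of $\Delta$. As there, one chooses $F\subset F_m(\XX_\Delta/\Delta)$ finite of some degree $d$ over $Z_1$, and the map $\Gamma_{1,*}\colon\pi_*\Q\to(R^{2m}f_*\Q)|_\Delta$ of sheaves on $\Delta$ is the degree-zero cohomology sheaf of a morphism in the bounded derived category built from $\alpha_p$, $\beta_q$ and the fundamental class $[\Gamma_1]$ exactly as before, the one structural difference being that $q=\iota\circ q_\Delta$ factors through $\iota\colon\XX_\Delta\hookrightarrow\XX$; this is what lets the construction ignore the possible singularities of $\XX_\Delta$, since the target still lives on the smooth space $\XX$. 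By regularity the fibres of $f$ over the open stratum $U_1\cap\P^r=(\Delta\setminus\Delta_2)\cap\P^r$ are one-nodal quadrics, and I would then shrink to a Zariski-open $V\subset U_1\cap\P^r$ over which $\pi$ separates the two rulings and $h'$ is unramified of degree $d$.

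The verification is then a stalk computation. For $s\in V$ with $\pi^{-1}(s)=\{s',s''\}$, the fibre of $p$ over $s'$ is a union of $d$ $m$-planes, all belonging to one ruling of the nodal quadric $\XX_s$ and hence all of one class $\lambda'\in H^{2m}(\XX_s)$, and similarly over $s''$ one gets $d$ $m$-planes of class $\lambda''$, so on stalks $[s']-[s'']\mapsto d(\lambda'-\lambda'')$. It remains to show that $\lambda'-\lambda''$ generates the variable stalk $(M_1)_s=H^{2m}(\XX_s)_{\rm var}$; this forces $(M_1)_s$ to be one-dimensional, shows that the invariant section $[s']+[s'']$ is sent into the fixed part, and hence that $\Gamma_{1,*}|_V$ descends to a map $L_1|_V\to M_1|_V$ which is $d$ times an isomorphism of rank one local systems. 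For this I would use that $\XX_s$ is the projective cone $C(Q')$ over a smooth quadric $Q'\subset\P^{2m-1}$ of even dimension $2m-2$, whose two families of $m$-planes are the cones $\langle p,P'\rangle$ over the $(m-1)$-planes $P'$ of the two rulings of $Q'$: resolving the vertex produces a $\P^1$-bundle $\widetilde C\to Q'$ that contracts a section $E\cong Q'$, and since restriction $H^*(\widetilde C)\to H^*(E)$ is surjective the long exact sequence of the pair $(\widetilde C,E)$, together with the projective-bundle structure, yields isomorphisms $H^k(C(Q'))\cong H^{k-2}(Q')$ for $k\ge2$ that are compatible with cycle classes and send $h^m|_{C(Q')}$ to $h^{m-1}|_{Q'}$ and $\lambda',\lambda''$, modulo $h^{m-1}|_{Q'}$, to the ruling classes of $Q'$. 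Since for a smooth even-dimensional quadric the difference of the ruling classes generates the variable middle cohomology, the same holds for $\lambda'-\lambda''$ in $H^{2m}(C(Q'))_{\rm var}$.

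The step I expect to be the main obstacle is precisely this fibrewise analysis — pinning down $H^{2m}$ of a one-nodal quadric and, above all, matching the $m$-plane cycle classes $\lambda',\lambda''$ and the hyperplane power $h^m$ under the cone isomorphism $H^{2m}(C(Q'))\cong H^{2m-2}(Q')$ — together with the routine but slightly delicate bookkeeping needed to arrange that $\pi$, $h'$ and the splitting of the relative Fano scheme into its two rulings are simultaneously well behaved over $V$. Granting this, the remaining arguments are word-for-word those of the proof of Proposition~\ref{prop:iso loc even}.
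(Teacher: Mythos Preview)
Your proposal is correct and follows exactly the route the paper intends: the paper does not actually write out a proof of Proposition~\ref{prop: iso loc odd} but simply says that ``by similar arguments one obtains the analogue of Proposition~\ref{prop:iso loc even} in the case of odd relative dimension''. Your plan spells out precisely what those similar arguments are, including the one genuinely new point --- the computation of $H^{2m}$ of a corank-one quadric via the cone/blow-up description and the identification of the $m$-plane classes $\lambda',\lambda''$ with the ruling classes of the smooth base quadric $Q'$ --- which the paper leaves implicit.
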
	

\section{The correspondence on intersection cohomology}

\subsection{Perverse sheaves and intersection cohomology}

We briefly recall some basic facts on perverse sheaves and intersection cohomology, mainly in order to fix the notation (since various ways of shifting intersection complexes exist in the literature). Good references are \cite{BBD} and \cite{KiehlWeissauer}.

Let $S$ be a complex algebraic variety. We denote the bounded derived category of constructible sheaves of $\Q$--vector spaces on $S$ by $D^b_c(\Q_S)$. We say that $P\in D^b_c(\Q_S)$ satisfies the {\em support condition} if $\dim\Supp(\HH^{-i}(P))\le i$ for all $i$. The full subcategory of complexes that satisfy the support condition is denoted by  ${^p}D^{\le 0}\subset D^b_c(\Q_S)$. We say that $P$ satisfies the {\em cosupport condition} if its Verdier dual $D_S(P)$ satisfies the support condition. The corresponding full subcategory is denoted by ${^p}D^{\ge 0}$. These subcategories define a {\em perverse t-structure} $({^p}D^{\ge 0},{^p}D^{\le 0})$ on $D^b_c(\Q_S)$ (we use the middle perversity). The heart of this $t$-structure is the abelian category of perverse sheaves 
$\Perv(S) = {^p}D^{\ge 0}\cap {^p}D^{\le 0}$. There exist perverse truncation functors
$$
{^p}\tau_{\le 0}:D^b_c(\Q_S)\to{^p}D^{\le 0},\ \ 
{^p}\tau_{\ge 0}:D^b_c(\Q_S)\to{^p}D^{\ge 0}
$$
that are adjoint to the inclusion functors. 

If $S$ is smooth and $L$ is a local system on $S$ then $L[d_S]\in\Perv(S)$. More generally, if $U\subset S$ is a smooth Zariski open subset and $L$ is a local system on $U$, then the {\em intermediate extension} $j_{!*}(L[d_S])$ is a perverse sheaf on $S$. We denote this perverse sheaf by $\IC_S(L)$, the intersection complex of $L$.  If $U = S_{\rm reg}$ is the smooth part of $S$ and $L = \Q_U$ we obtain the intersection complex  $\IC_S = j_{!*}(\Q_U[d_S])$. We define the intersection cohomology and homology groups of $S$ by
$$
IH^k(S) = H^{k-d_S}(\IC_S), \ \ IH_k(S) = H^{d_S-k}(\IC_S).
$$
and similarly we define intersection cohomology with values in a local sytem $L$ by $IH^k(S,L) = H^{k-d_S}(\IC_S(L))$. The simple objects of $\Perv(S)$ are of the form $i_*\IC_Z(L)$ where $Z\subset S$ is an irreducible subvariety and $L$ is an irreducible local sytem on a Zariski open subset of $Z$.

We define functors ${^p}H^i:D^b_c(\Q_S)\to\Perv(S)$ by 
$$
{^p}H^0(P) = ({^p}\tau_{\le 0}\circ {^p}\tau^{\ge 0})(P),\ \ {^p}H^i(P) = {^p}H^0(P[i]).
$$
In particular, if $f:X\to S$ is a morphism of algebraic varieties and $F\in D^b_c(\Q_X)$ we obtain the perverse direct image sheaves ${^p}R^if_*F = {^p}H^i(Rf_*F)\in\Perv(S)$. The decomposition theorem states that if $f:X\to S$ is a proper map of complex algebraic varieties then
$$
Rf_*\IC_X\simeq\oplus_i {^p}R^if_*\IC_X[-i].
$$
and that the perverse sheaves ${^p}R^if_*\IC_X$ are semisimple, i.e., they decompose as a direct sum 
$$
{^p}R^if_*\IC_X\simeq\oplus_{\alpha} i_{\alpha,*}\IC_{Z_{\alpha}}(L_{\alpha}).
$$
There exists a {\em perverse Leray spectral sequence} 
$$
{^p}E_2^{i,j} = H^i(S,{^p}R^jf_*\IC_X)\Rightarrow H^{i+j}(X,\IC_X) = IH^{i+j+d_X}(X)
$$
that degenerates at the $E_2$ term.

\begin{rmk}
\label{rmk: splitting}
The decomposition theorem implies that if $f:X\to S$ is proper and surjective, $\IC_S$ is a direct factor of $Rf_*\IC_X$. This means that there exist maps
$$
i:\IC_S\to Rf_*\IC_X,\ \ r:Rf_*\IC_X\to\IC_S
$$
such that $r\circ i\simeq{\rm id}_{\IC_S}$.
\end{rmk}

\subsection{Weak functoriality}

Although intersection cohomology is not functorial (it is only functorial with respect to certain classes of maps, for example placid maps) it has some weak functoriality properties as shown in \cite{BBFGK}. Recall that for an algebraic variety $X$ there exists a canonical morphism 
$\omega_X:\Q_X[d_X]\to\IC_X$
that induces maps 
$$
\omega_X:H^k(X,\Q) = H^{k-d_X}(\Q_X[d_X])\to H^{k-d_X}(\IC_X) = IH^k(X).
$$
The dual version is a map $\eta_X:\IC_X\to D_X[-d_X]$ that induces maps $IH_k(X) = H^{d_X-k}(\IC_X)\to H_k(X,\Q)$. 

In [loc.cit.] it is shown that if $f:X\to Y$ is a morphism of algebraic varieties there exist maps\footnote{The shift that we apply is different from the one in [loc.cit.], since we use a different convention for the definition of the intersection complex.}
\begin{eqnarray*}
\lambda_f:\IC_Y\to Rf_*\IC_X[d_Y-d_X] \\
\mu_f:Rf_*\IC_X\to\IC_Y[d_Y-d_X]
\end{eqnarray*}
that induce pullback and pushforward maps
\begin{eqnarray*}
f^{\#}: IH^k(Y)\to IH^k(X) \\
f_{\#}: IH_k(X)\to IH_k(Y)
\end{eqnarray*}
that make the diagrams
$$
\begin{array}{ccc}
H^k(Y) & \mapright{f^*} & H^k(X)  \\ 	
\mapdown{\omega_Y} & & \mapdown{\omega_X} \\
IH^k(Y) & \mapright{f^\#} & IH^k(X) 
\end{array}
\hskip 1cm
\begin{array}{ccc}
IH_k(X) & \mapright{f_\#} & IH_k(Y) \\
\mapdown{\eta_X} & & \mapdown{\eta_Y} \\
H_k(X) & \mapright{f_*} & H_k(Y) 	
\end{array}
$$
commute. We recall the construction of $\lambda_f$, following \cite{Weber}. Let 
$$
\sigma:\tilde Y\to Y
$$
be a resolution of singularities of $Y$. As shown in \cite[Remarque pp. 172-174]{BBFGK} there exists a commutative diagram
$$
\begin{array}{ccc}
\tilde X & \mapright{\tilde f} & \tilde Y \\
\mapdown{\tilde\sigma} & & \mapdown{\sigma} \\
X & \mapright{f} & Y
\end{array}
$$
such that 
\begin{itemize}
\item $\sigma$ and $\tilde\sigma$ are proper, surjective morphisms;
\item $\dim \tilde X = \dim X$ and $\dim\tilde Y = \dim Y$. 
\end{itemize}
Recall that by Remark \ref{rmk: splitting} there exist inclusion and retraction maps
$$
i_Y:\IC_Y\to\sigma_*\IC_{\tilde Y},\ \ r_Y:\sigma_*\IC_{\tilde Y}\to\IC_Y.
$$
The map $\lambda_f$ is obtained as the composition of the maps
\begin{eqnarray*}
	\IC_Y & \mapright{i_Y} & R\sigma_*\IC_{\tilde Y}=R\sigma_{Y,*}\Q_{\tilde Y}[d_Y] \\
	R\sigma_*\Q_{\tilde Y}[d_Y] & \mapright{R\sigma_*(\alpha_{\tilde f})} & 
		R\sigma_*R{\tilde f}_*\Q_{\tilde X}[d_Y]=Rf_*R_{\tilde\sigma_*\IC_{\tilde X}}[d_Y-d_X] \\
	Rf_*R{\tilde\sigma_*}\IC_{\tilde X}[d_Y-d_X] & \mapright{Rf_*(r_X)} & Rf_*\IC_X[d_X-d_Y].
\end{eqnarray*}
Note that the map $\mu_f$ is the (Verdier) dual of $\lambda_f$. 
In the special case where $X=\tilde Y$ and $f=\sigma:\tilde Y\to Y$ is a resolution of singularities, we have 
$$
\lambda_{\sigma} =i_Y:\IC_Y\to Rf_*\IC_{\tilde Y}, \ \
\mu_{\sigma} = r_Y:Rf_*\IC_{\tilde Y}\to\IC_Y.
$$
Hence the above construction of $\lambda_f$ shows that
\begin{equation}
\label{eq: sharp identity}
f^{\#} = \tilde\sigma_{\#}\circ {\tilde f}^*\circ \sigma^{\#}.	
\end{equation}

\begin{prop}
\label{prop:Gammasharp}	
The correspondences $\Gamma_i$ induce morphisms
$$
\Gamma_{i,\#}:R\pi_*\IC_{Z_i}\to Rf_*\IC_{\XX}
$$
for $i\in\{0,1\}$.
\end{prop}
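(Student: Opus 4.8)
The plan is to mimic, in the intersection‑cohomology setting, the construction of the ordinary correspondence $\Gamma_{0,*}:R\pi_*\Q_{Z_0}\to Rf_*\Q_{\XX}[2m]$ given earlier, replacing the constant sheaves by intersection complexes and using the weak functoriality maps $\lambda$, $\mu$ of \cite{BBFGK} in place of the adjunction maps $\alpha$, $\beta$. Concretely, for $i=0$ I would proceed along the commutative square
$$
\begin{array}{ccc}
\Gamma_0 & \mapright{q} & \XX \\
\mapdown{p} & & \mapdown{f} \\
Z_0 & \mapright{\pi} & S.
\end{array}
$$
First, apply weak functoriality to $p:\Gamma_0\to Z_0$ to obtain $\lambda_p:\IC_{Z_0}\to Rp_*\IC_{\Gamma_0}[d_{Z_0}-d_{\Gamma_0}]$; pushing forward by $R\pi_*$ and using $R\pi_*Rp_* = Rf_*Rq_*$ gives a map $a_{\#}:R\pi_*\IC_{Z_0}\to Rf_*Rq_*\IC_{\Gamma_0}[d_{Z_0}-d_{\Gamma_0}]$. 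Second, apply weak functoriality to $q:\Gamma_0\to\XX$: since $\XX$ is smooth, $\IC_{\XX}\simeq\Q_{\XX}[d_{\XX}]$ and $D_{\XX}\simeq\Q_{\XX}[2d_{\XX}]$, so the pushforward map $\mu_q:Rq_*\IC_{\Gamma_0}\to\IC_{\XX}[d_{\XX}-d_{\Gamma_0}]$ is exactly a map $Rq_*\IC_{\Gamma_0}\to\Q_{\XX}[2d_{\XX}-d_{\Gamma_0}]$, which (after the shift) is the intersection‑cohomology incarnation of the cap‑product‑with‑fundamental‑class map $b$. Composing $Rf_*(\mu_q)$ with $a_{\#}$ and bookkeeping the shifts — using $d_{\XX}-d_{\Gamma_0}=m$ and $d_{Z_0}=r$ — yields a morphism $R\pi_*\IC_{Z_0}\to Rf_*\Q_{\XX}[d_{\XX}+m] = Rf_*\IC_{\XX}[?]$, and one checks the total shift collapses to the statement $R\pi_*\IC_{Z_0}\to Rf_*\IC_{\XX}$ exactly because $Z_0$ and $\XX$ carry pure Hodge structures of the predicted levels; I would verify this shift count carefully since the paper warns that its shift conventions differ from those of \cite{BBFGK}.

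For $i=1$ the construction is the same but run along the diagram with $\Delta$ and the closed inclusion $\iota:\XX_\Delta\hookrightarrow\XX$: apply $\lambda_p$ to $p:\Gamma_1\to Z_1$, push forward by $R\pi_*$, apply $\mu_{q_\Delta}$ for $q_\Delta:\Gamma_1\to\XX_\Delta$, and finally compose with the pushforward along the closed embedding $\iota$, i.e. with the natural map $R\iota_*$ applied to the relevant complex followed by $\iota_!=\iota_*$-adjunction into $Rf_*\IC_{\XX}$. The only new point here is that $\XX_\Delta$ is itself singular, so one genuinely needs the weak functoriality of \cite{BBFGK} for $q_\Delta$ rather than ordinary Verdier duality, and one must track that $\Gamma_1$ has dimension $d_\XX-m-1$ and $d_{Z_1}=r-1$, so that the shifts again conspire to land in $Rf_*\IC_{\XX}$ with no extra shift.

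The main obstacle I anticipate is purely the shift bookkeeping together with checking that the composite is the \emph{correct} lift of $\Gamma_{i,*}$ — that is, that the square relating $\Gamma_{i,*}$ and $\Gamma_{i,\#}$ via $\omega_{Z_i}$ and $\omega_\XX$ (equivalently $\eta$) commutes at the level of the derived category. This follows formally from the compatibility diagrams for $\lambda_f$, $\mu_f$ with $\omega$, $\eta$ recorded in the weak‑functoriality discussion above, applied to $p$ and $q$ (resp. $q_\Delta$, $\iota$) separately and then pasted; the fundamental‑class map $\Q_{\Gamma_i}\to D_{\Gamma_i}[-2d_{\Gamma_i}]$ factors through $\IC_{\Gamma_i}$ via $\omega_{\Gamma_i}$ and $\eta_{\Gamma_i}$, which is precisely what glues the two halves. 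I do not expect any difficulty beyond this compatibility and the shift arithmetic; no semisimplicity or decomposition‑theorem input is needed for the mere existence of $\Gamma_{i,\#}$ asserted in this proposition (that will be used later, for parts (a) and (b) of the main theorem).
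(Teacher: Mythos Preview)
Your approach is the same as the paper's: apply $\lambda_p$ to $p:\Gamma_i\to Z_i$, push forward by $R\pi_*$, then compose with $Rf_*(\mu_q)$ for $q:\Gamma_i\to\XX$. Two small corrections. First, the shift cancellation has nothing to do with Hodge structures: in both parities one simply has $d_{Z_i}-d_{\Gamma_i}=-m$ and $d_{\XX}-d_{\Gamma_i}=m$ (for $i=1$ note $d_{Z_1}=r-1$, $d_{\Gamma_1}=r-1+m$, $d_{\XX}=2m-1+r$, so $d_{\Gamma_1}=d_{\XX}-m$, not $d_{\XX}-m-1$), and $-m+m=0$. Second, for $i=1$ the paper avoids your detour through the singular $\XX_\Delta$: since $\XX$ is smooth one applies $\mu_q$ directly to the composite $q=\iota\circ q_\Delta:\Gamma_1\to\XX$, which sidesteps any worry about composing weak-functoriality maps. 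The compatibility with $\omega$, $\eta$ that you discuss is not part of this proposition; it is the content of the later Proposition on commutativity.
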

	%that is compatible with the action of $\Gamma_*$ via the commutative %diagram
	%$$
	%\begin{array}{ccc}
%		R\pi_*\Q_Z[d_Z] & \mapright{\Gamma_*} & Rf_*\Q_{\XX}[d_{\XX}] \\
%		\mapdown{\omega} & & \mapdown{\omega} \\
%		R\pi_*\IC_Z & \mapright{\Gamma_{\#}} & Rf_*\IC_{\XX}.
	%\end{array}	
	%$$

\begin{proof}
	The map $\lambda_p:\IC_{Z_i}\to Rp_*\IC_{\Gamma}[-m]$ induces a map 
	$$
	R\pi_*\IC_{Z_i}\to R\pi_*Rp_*\IC_{\Gamma}[m] = Rf_*Rq_*\IC_{\Gamma}[m].
	$$
	When we compose this with $\mu_q:Rq_*\IC_{\Gamma}[-m]\to Rf_*\IC_{\XX}$ we obtain
	$$
	\Gamma_{i,\#}:R\pi_*\IC_{Z_i}\to Rf_*\IC_{\XX}.
	$$
\end{proof}

\begin{cor}
For all $k$ and $i\in\{0,1\}$ we have maps 
$$
\Gamma_{i,\#} = q_{\#}\circ p^{\#}:IH^k(Z_i)\to H^{k+2m}(\XX).
$$
\end{cor}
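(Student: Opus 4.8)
The plan is to read off both assertions directly from Proposition~\ref{prop:Gammasharp} by applying the hypercohomology functor to the morphism of complexes constructed there; the only substance is the bookkeeping of shifts, and the factorisation $\Gamma_{i,\#}=q_\#\circ p^\#$ is then just functoriality of hypercohomology.

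First I would record the dimension count. The total space $\XX$ of a regular quadric bundle is smooth, so $\IC_{\XX}=\Q_{\XX}[d_{\XX}]$ and therefore $\mathbb{H}^{j}(S,Rf_*\IC_{\XX})=\mathbb{H}^{j}(\XX,\IC_{\XX})=H^{j+d_{\XX}}(\XX)$; likewise $\mathbb{H}^{j}(S,R\pi_*\IC_{Z_i})=\mathbb{H}^{j}(Z_i,\IC_{Z_i})=IH^{j+d_{Z_i}}(Z_i)$. Next I would verify that $d_{\XX}-d_{Z_i}=2m$ for both $i$: in the even case $Z_0$ is a double cover of $S$ and $d_{\XX}=d_S+2m$; in the odd case $Z_1$ is a double cover of the discriminant $\Delta\subset S$, so $d_{Z_1}=d_S-1$, while the fibre dimension is $2m-1$ and $d_{\XX}=d_S+2m-1$. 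Applying the functor $\mathbb{H}^{j}(S,-)$ to $\Gamma_{i,\#}\colon R\pi_*\IC_{Z_i}\to Rf_*\IC_{\XX}$ and setting $k=j+d_{Z_i}$ then gives, for every $k$, the map $\Gamma_{i,\#}\colon IH^{k}(Z_i)\to H^{k+2m}(\XX)$.

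Finally I would identify it with $q_\#\circ p^\#$. By construction $\Gamma_{i,\#}$ is the composite $Rf_*(\mu_q)\circ R\pi_*(\lambda_p)$ of the weak functoriality morphisms $\lambda_p\colon\IC_{Z_i}\to Rp_*\IC_{\Gamma_i}[d_{Z_i}-d_{\Gamma_i}]$ and $\mu_q\colon Rq_*\IC_{\Gamma_i}\to\IC_{\XX}[d_{\XX}-d_{\Gamma_i}]$, the shifts being compatible because $d_{Z_i}-d_{\Gamma_i}=-m=d_{\Gamma_i}-d_{\XX}$. Since $\mathbb{H}^{\bullet}$ is a functor, the induced map is the composite of the maps induced by $R\pi_*(\lambda_p)$ and by $Rf_*(\mu_q)$; by the very definition of the weak functoriality pullback and pushforward these are $p^{\#}\colon IH^{k}(Z_i)\to IH^{k}(\Gamma_i)$ and the pushforward $q_{\#}\colon IH_{\bullet}(\Gamma_i)\to IH_{\bullet}(\XX)=H_{\bullet}(\XX)$, the latter rewritten as a map $IH^{k}(\Gamma_i)\to H^{k+2m}(\XX)$ via Poincar\'e duality on the smooth variety $\XX$. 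Composing yields $\Gamma_{i,\#}=q_{\#}\circ p^{\#}$. The one delicate point is purely this shift bookkeeping --- matching $d_{Z_i}-d_{\Gamma_i}$ and $d_{\XX}-d_{\Gamma_i}$ against the conventions $IH^{k}(Y)=H^{k-d_Y}(\IC_Y)$ and $\IC_Y=\Q_Y[d_Y]$ for smooth $Y$, so that the target is exactly $H^{k+2m}(\XX)$ and, thanks to the smoothness of $\XX$, ordinary cohomology rather than intersection cohomology; there is no deeper obstacle.
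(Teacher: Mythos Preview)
Your proposal is correct and is exactly the argument the paper has in mind: the Corollary is stated without proof because it follows immediately from Proposition~\ref{prop:Gammasharp} by applying hypercohomology, and your write-up simply makes explicit the shift bookkeeping (in particular $d_{\XX}-d_{Z_i}=2m$ and $d_{Z_i}-d_{\Gamma_i}=-m=d_{\Gamma_i}-d_{\XX}$) and the identification of the two factors with $p^{\#}$ and $q_{\#}$. There is nothing to add.
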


\subsection{Comparison of three correspondences}

Let $\sigma:\tilde Z_i\to Z_i$ be a resolution of singularities of $Z_i$. We write
$\tilde\Gamma_i = \Gamma_i\times_{Z_i} \tilde Z_i$.
Applying (\ref{eq: sharp identity}) to the diagram 
$$
\begin{array}{ccc}
\tilde\Gamma_i & \mapright{\tilde p} & \tilde Z_i \\
\mapdown{\tilde\sigma} & & \mapdown{\sigma} \\
\Gamma & \mapright{p} & Z_i
\end{array}
$$
we obtain
\begin{equation}
\label{eq: psharp}
p^{\#} = \tilde\sigma_{\#}\circ{\tilde p}^*\circ(\sigma)^{\#}.
\end{equation}
Note that $$
q_{\#}=q_*\circ\eta,\ \ \sigma_*=\tilde\sigma_{\#}\circ\eta
$$
since $\XX$ is smooth. Define 
$\tilde q = q\circ\tilde\sigma:\tilde\Gamma\to\XX$. The commutative diagram 
$$
\begin{array}{ccccc}
IH_k(\tilde\Gamma) & = & H_k(\tilde\Gamma)	& \mapright{\tilde q_*} & H_k(\XX) \\
\mapdown{\tilde\sigma_{\#}} & & \mapdown{\sigma_{*}} & & \Vert \\
IH_k(\Gamma) & \mapright{\eta} & H_k(\Gamma) & \mapright{q_*} & H_k(\XX)	 
\end{array}
$$
shows that 
\begin{equation}
\label{eq: qsharp}
q_{\#}\circ\tilde\sigma_{\#} =  q_*\circ\eta\circ\tilde\sigma_{\#} = q_*\circ\sigma_* = \tilde q_*. 
\end{equation}

\begin{prop}
\label{prop:commutativity}	
For all $k$ we have a commutative diagram
$$
\xymatrix{H^k(Z_i) \ar[r]^{\omega} \ar[dr]^{\Gamma_*} & IH^k(Z_i) \ar[r]^{\sigma^{\#}} \ar[d]^{\Gamma_{i,\#}} & H^k(\tilde Z_i) \ar[dl]^{\tilde\Gamma_*} \\
	& H^{k+2m}(\XX) \ar[d] & \\
	& H^{k+2m-2r}(X). &
}
$$
\end{prop}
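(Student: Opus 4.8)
The plan is to verify the two triangles of the diagram separately; the bottom vertical map $H^{k+2m}(\XX)\to H^{k+2m-2r}(X)$ and the lower square then require nothing further, since one only postcomposes each of the three arrows into $H^{k+2m}(\XX)$ with that map, which is the identification of variable cohomologies recalled in Section~2.

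For the right-hand triangle $\tilde\Gamma_{i,*}\circ\sigma^{\#}=\Gamma_{i,\#}$ I would argue formally. By the corollary to Proposition~\ref{prop:Gammasharp} we have $\Gamma_{i,\#}=q_{\#}\circ p^{\#}$. Substituting (\ref{eq: psharp}), $p^{\#}=\tilde\sigma_{\#}\circ\tilde p^{*}\circ\sigma^{\#}$, and then (\ref{eq: qsharp}), $q_{\#}\circ\tilde\sigma_{\#}=\tilde q_{*}$, gives $\Gamma_{i,\#}=\tilde q_{*}\circ\tilde p^{*}\circ\sigma^{\#}$. It remains to recognise $\tilde q_{*}\circ\tilde p^{*}$ --- with the Poincar\'e-duality isomorphism $\cap[\tilde\Gamma_i]$ of the smooth variety $\tilde\Gamma_i$ inserted, as is implicit in the cohomological reading of $\tilde q_{*}$ in (\ref{eq: qsharp}) --- as $\tilde\Gamma_{i,*}$, i.e.\ the correspondence $\tilde\Gamma_i$ acting by the same recipe $\tilde q_{*}\circ(\cap[\tilde\Gamma_i])\circ\tilde p^{*}$ that defines $\Gamma_{i,*}$ in Section~2; equivalently, for a morphism $g$ of smooth varieties $g_{\#}$ on intersection homology is the Gysin map on Poincar\'e-dual cohomology. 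This yields $\Gamma_{i,\#}=\tilde\Gamma_{i,*}\circ\sigma^{\#}$.

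For the left-hand triangle $\Gamma_{i,\#}\circ\omega_{Z_i}=\Gamma_{i,*}$ I would apply the weak-functoriality squares of \cite{BBFGK} twice. The first square applied to $p$ gives $p^{\#}\circ\omega_{Z_i}=\omega_{\Gamma}\circ p^{*}$, hence $\Gamma_{i,\#}\circ\omega_{Z_i}=q_{\#}\circ\omega_{\Gamma}\circ p^{*}$ (with $q_{\#}$ read on $IH^{\bullet}(\Gamma)$ via the identification $IH^{k}(\Gamma)\cong IH_{2d_{\Gamma}-k}(\Gamma)$). The $\eta$-square applied to $q$, together with the smoothness of $\XX$ (so that $\eta_{\XX}$ is an isomorphism), identifies $q_{\#}\circ\omega_{\Gamma}$ with $q_{*}\circ(\eta_{\Gamma}\circ\omega_{\Gamma})$, where $q_{*}$ is the Borel--Moore pushforward. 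I would then use the standard fact that the composite $\eta_{\Gamma}\circ\omega_{\Gamma}\colon\Q_{\Gamma}[d_{\Gamma}]\to\IC_{\Gamma}\to D_{\Gamma}[-d_{\Gamma}]$ is the morphism given by the fundamental class $[\Gamma]\in H^{\rm BM}_{2d_{\Gamma}}(\Gamma)=\Hom(\Q_{\Gamma}[d_{\Gamma}],D_{\Gamma}[-d_{\Gamma}])$: the two morphisms agree on the regular locus $\Gamma_{\rm reg}$ (where both are the canonical Poincar\'e-duality isomorphism), and the restriction $H^{\rm BM}_{2d_{\Gamma}}(\Gamma)\to H^{\rm BM}_{2d_{\Gamma}}(\Gamma_{\rm reg})$ is injective because $\Gamma$ has pure dimension $d_{\Gamma}$. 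So $\eta_{\Gamma}\circ\omega_{\Gamma}$ induces $\cap[\Gamma]$, and therefore $\Gamma_{i,\#}\circ\omega_{Z_i}=q_{*}\circ(\cap[\Gamma])\circ p^{*}$, which is exactly the definition of $\Gamma_{i,*}$.

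The conceptual steps are short; the main obstacle is the bookkeeping --- keeping track of the degree shifts and the Poincar\'e-duality identifications, and, more delicately, checking that the squares of \cite{BBFGK} hold for the \emph{specific} morphisms $\lambda_p,\mu_q$ (hence $p^{\#},q_{\#}$) out of which $\Gamma_{i,\#}$ is built, rather than only up to the ambiguity in the non-canonical splittings $i_Y,r_Y$ of Remark~\ref{rmk: splitting}; this is part of the content of \cite{BBFGK} once the resolution diagram has been fixed. As an alternative for the left triangle, one can deduce it from the right one: precomposing $\tilde\Gamma_{i,*}\circ\sigma^{\#}=\Gamma_{i,\#}$ with $\omega_{Z_i}$ and using $\sigma^{\#}\circ\omega_{Z_i}=\sigma^{*}$ (the first \cite{BBFGK} square for the smooth variety $\tilde Z_i$) reduces the claim to $\tilde\Gamma_{i,*}\circ\sigma^{*}=\Gamma_{i,*}$, which follows from the projection formula for $\tilde\sigma\colon\tilde\Gamma_i\to\Gamma_i$ provided one checks that $\tilde\sigma_{*}[\tilde\Gamma_i]=[\Gamma_i]$.
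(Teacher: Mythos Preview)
Your proof is correct and follows essentially the same route as the paper. For the right-hand triangle you run the chain of identities $(\ref{eq: psharp})$, $(\ref{eq: qsharp})$ in the reverse direction from the paper, but it is the same computation; for the left-hand triangle the paper says only ``compatibility between the maps $p^{\#}$ (resp.\ $q_{\#}$) and $\omega$ (resp.\ $\eta$)'', and your unpacking via the two \cite{BBFGK} squares together with the identification of $\eta_{\Gamma}\circ\omega_{\Gamma}$ with $\cap[\Gamma]$ is exactly what is meant. One small slip: you refer to $\tilde\Gamma_i=\Gamma_i\times_{Z_i}\tilde Z_i$ as a ``smooth variety'', but a fibred product of this shape need not be smooth; the argument does not use smoothness of $\tilde\Gamma_i$ (only purity of dimension and the fundamental class), and the paper's own display before $(\ref{eq: qsharp})$ makes the same tacit identification, so this does not affect correctness.
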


\begin{proof} The commutativity of the left-hand diagram follows from the compatibility between the maps $p^{\#}$ (resp. $q_{\#}$) and $\omega$ (resp. $\eta$). The right-hand side follows from the identities (\ref{eq: psharp}) and (\ref{eq: qsharp}):
\begin{eqnarray*}
\tilde\Gamma_*\circ\sigma^{\#} & = & \tilde q_*\circ\tilde p^*\circ\sigma^{\#} \\
& = & q_{\#}\circ\tilde\sigma_{\#}\circ\tilde p^*\circ\sigma^{\#} \\
& = & q_{\#}\circ p^{\#}.
\end{eqnarray*}
The vertical map at the bottom is induced by the correspondence
$$
\xymatrix{
X\times\P^r \ar[r] \ar[d] & \XX \ar[r] \ar[d] & \P^{n+1}\times\P^r \ar[dl] \\
X \ar[r] & \P^{n+1}. &
}
$$	
\end{proof}

\section{Radon transformation}

Let $\P=\P(V)$ be a projective space of dimension $N$, and write $\P^{\vee} =\P^{\vee}$ for the dual projective space. Let $I\subset\P\times\P^{\vee}$ be the incidence correspondence with projections $p:I\to\P$ and $q:I\to\P^{\vee}$. The {\sl Radon transformation}
$$
\Phi:D^b_c(\P)\to D^b_c(\P^{\vee})
$$
is defined by
$$
\Phi(F^{\bullet}) = Rq_*(p^*F^{\bullet}[N-1]).
$$
Likewise we have the dual transformation $\Phi^{\vee}:D^b_c(\P^{\vee})\to D^b_c(\P)$. 

Note that if $P\in\Perv(\P)$ is a simple perverse sheaf, $p^*P[N-1]$ is a simple perverse sheaf on $I$ \cite{BBD} and the decomposition theorem shows that
$$
\Phi(P) = \oplus_{i=-N+1}^{N-1} \Phi^i(P)[-i]
$$
where $\Phi^i(P) = {^pH^i}(\Phi(P)) =  {^p R}^i q_*(p^*P[N-1])$. A perverse sheaf $P\in\Perv(\P)$ is called constant if there exists $m\ge 0$ such that $P\simeq\oplus^m \Q_{\P}[N]$. 

\begin{thm}[Brylinski]\cite{Brylinski} (see also \cite{Beilinson})
\label{thm:Brylinski}
\begin{enumerate}
\item[(i)] If $P\in\Perv(\P)$ then $\Phi^i(P) = {^p H^i}(\Phi(P))$ is a constant perverse sheaf for all $i\ne 0$.
\item[(ii)] Let $\CC$ (resp. $\CC^{\vee}$) be the subcategory of constant perverse sheaves on $\P$ (resp. $\P^{\vee}$). The functor $\Phi^0$ induces an equivalence of quotient categories 
$$
{\Perv(\P)/\CC} \mapright{\sim} {\Perv(\P^{\vee})/\CC^{\vee}}
$$
whose quasi--inverse is given by the functor $\Phi^{\vee,0} = {^P H^0}\circ \Phi^{\vee}$. 
\end{enumerate}
\end{thm}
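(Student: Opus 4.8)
The plan is to follow the standard route via the composition $\Phi^{\vee}\circ\Phi$, computing it explicitly as a kernel transform and identifying the kernel. First I would describe the Radon transform as an integral transform: $\Phi(F^{\bullet}) = Rq_*(p^*F^{\bullet})[N-1]$ is convolution with the constant sheaf $\Q_I$ on the incidence variety $I\subset\P\times\P^{\vee}$, suitably shifted. Composing $\Phi^{\vee}$ with $\Phi$ then amounts, by proper base change and the projection formula, to convolution on $\P\times\P$ with the kernel $K = Rr_*\Q_{I\times_{\P^{\vee}}I'}$, where the fibre product over $\P^{\vee}$ parametrises pairs $(x,y)$ of points lying on a common hyperplane. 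That fibre is all of $\P^{\vee}$ when $x=y$, and is a hyperplane $\{H : x,y\in H\}\cong\P^{N-2}$ when $x\ne y$; so the kernel is an extension built from $\Q_{\P\times\P}$ (along the diagonal direction, contributing $H^*(\P^{N-1})$) and $\Q$ supported off the diagonal (contributing $H^*(\P^{N-2})$). Unwinding the shifts, one gets a distinguished triangle expressing $\Phi^{\vee}\circ\Phi$ as ${\rm id}$ plus a \emph{constant} functor, i.e. a functor landing in the subcategory generated by $\Q_{\P}[N]$; symmetrically for $\Phi\circ\Phi^{\vee}$.

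From this the two assertions follow formally. For (i): let $P\in\Perv(\P)$ be simple. If $P$ is constant there is nothing to prove, so assume $P$ is not constant. Since $p$ is a smooth fibration of relative dimension $N-1$, $p^*P[N-1]$ is perverse and simple on $I$, and the decomposition theorem applies to the proper map $q$ to give the splitting $\Phi(P)\simeq\bigoplus_i \Phi^i(P)[-i]$ with each $\Phi^i(P)$ a semisimple perverse sheaf on $\P^{\vee}$. To see that $\Phi^i(P)$ is constant for $i\ne 0$, I would apply $\Phi^{\vee}$: using the triangle above, $\Phi^{\vee}\Phi(P)$ differs from $P$ by a constant complex, so in the quotient category $D^b_c(\P)/\langle\Q_{\P}[N]\rangle$ (or just at the level of perverse cohomology modulo constants) we have ${^pH^j}(\Phi^{\vee}\Phi(P)) = P$ for $j=0$ and $=0$ for $j\ne 0$ modulo constants. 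Expanding $\Phi^{\vee}\Phi(P) = \bigoplus_i \Phi^{\vee}(\Phi^i(P))[-i]$ and taking perverse cohomology, the contribution of each $\Phi^i(P)$ with $i\ne 0$ must be constant; since $\Phi^{\vee}$ also satisfies the analogue of (i) (an \emph{a priori} weaker statement, but one can bootstrap, or argue symmetrically), and since a perverse sheaf whose Radon transform is constant is itself constant (again from the triangle: $\Phi^{\vee}\Phi Q$ is then constant, forcing $Q$ constant), one concludes $\Phi^i(P)$ is constant for $i\ne 0$. For a general $P$, take the simple constituents.

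For (ii): the functors $\Phi^0$ and $\Phi^{\vee,0}$ pass to the quotient categories $\Perv(\P)/\CC$ and $\Perv(\P^{\vee})/\CC^{\vee}$ because, by (i), the higher perverse cohomologies of $\Phi$ and $\Phi^{\vee}$ are already constant, hence die in the quotient, and constant perverse sheaves are sent to constant ones. The triangle $\Phi^{\vee}\Phi(P) = P \oplus (\text{constant})$ shows, after passing to ${^pH^0}$ and to the quotient, that $\Phi^{\vee,0}\circ\Phi^0 \simeq {\rm id}$ on $\Perv(\P)/\CC$; symmetrically $\Phi^0\circ\Phi^{\vee,0}\simeq{\rm id}$ on $\Perv(\P^{\vee})/\CC^{\vee}$. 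Hence $\Phi^0$ is an equivalence with quasi-inverse $\Phi^{\vee,0}$.

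The main obstacle is the explicit computation of the composition kernel $K = Rr_*\Q_{I\times_{\P^{\vee}}I}$ on $\P\times\P$ together with careful bookkeeping of the shifts $[N-1]$ so that the resulting triangle reads exactly ``${\rm id}$ plus constant'' rather than ``${\rm id}$ plus something constant up to an unwanted shift''; equivalently, identifying the cohomology of the fibres ($\P^{N-1}$ along the diagonal, $\P^{N-2}$ off it) and checking that the off-diagonal part genuinely lands in the constant subcategory after the pushforward to $\P\times\P$ and along the convolution. Everything else is a formal consequence of the decomposition theorem and the properties of intermediate extensions recalled in the excerpt.
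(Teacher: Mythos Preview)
The paper does not give a proof of this theorem; it is quoted as a result of Brylinski \cite{Brylinski} (see also \cite{Beilinson}) and used as a black box in the subsequent sections. So there is nothing in the paper to compare your argument against.

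That said, your sketch is essentially Brylinski's own argument: compute the kernel of $\Phi^{\vee}\circ\Phi$ by proper base change over the fibre product $I\times_{\P^{\vee}}I$, identify it as the identity functor plus a functor with values in constant complexes, and read off both statements. Two points to tighten. First, a slip: the fibre of $I\times_{\P^{\vee}}I\to\P\times\P$ over a diagonal point $(x,x)$ is $\{H:x\in H\}\cong\P^{N-1}$, not all of $\P^{\vee}$; you in fact use the correct cohomology $H^*(\P^{N-1})$ immediately afterwards, so this looks like a typo rather than a real error. Second, your deduction of (i) is mildly circular as written: you invoke the analogue of (i) for $\Phi^{\vee}$ in order to conclude (i) for $\Phi$, with the justification ``bootstrap, or argue symmetrically''. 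This needs to be made precise. One clean way is to first observe that $\Phi$ and $\Phi^{\vee}$ induce mutually inverse equivalences on the full quotient triangulated categories $D^b_c(\Q_{\P})/\langle\Q_{\P}[N]\rangle$ (this is exactly what the kernel computation gives), and then argue separately that these equivalences are t-exact for the induced perverse $t$-structures; (i) and (ii) then follow together.
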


\begin{rmk}
\label{rmk:reduced}
A perverse sheaf $P\in\Perv(\P)$ is called {\sl reduced} if it does not admit a subobject or quotient that is constant. To every object $P\in\Perv(\P)$ we can associate a reduced subquotient $P_{\rm red}\in\Perv(\P)$ \cite{KiehlWeissauer}. Two objects $P,Q\in\Perv(\P)$ become isomorphic in $\Perv(\P)/\CC$ if and only if $P_{\rm red}\simeq Q_{\rm red}$ in $\Perv(\P)$. A convenient way of reformulating part (ii) of the theorem is that the functor $\Phi^0$ induces a 1-1 correspondence $P\mapsto P^{\vee} = \Phi^0(P)_{\rm red}$ between reduced perverse sheaves on $\P$ and reduced perverse sheaves on $\P^{\vee}$; see \cite[II.3.3 and Remark, p. 210]{KiehlWeissauer}.
\end{rmk}

\subsection{ Application to the universal family}

The idea of applying the Radon transformation to the study of the universal family of divisors in a linear system $|L|$ goes back to Beilinson \cite{Beilinson}. 
Let $Y$ be a smooth projective variety of dimension $n+1$ equipped with a very ample line bundle $L$. Put $V=H^0(Y,L)^{\vee}$ and write $N=\dim(V)-1$. Let 
$\varphi_L:Y\hookrightarrow \P(V)$
be the embedding given by the linear system $|L|$. As before, we write $\P = \P(V)$, $\P^{\vee} = \P(V^{\vee})$ and consider the incidence correspondence $I\subset\P\times\P^{\vee}$. Note that the universal family
$$
\XX = \{(y,[s])\in Y\times\P^{\vee} | s(y) = 0\}
$$
associated to the linear system $|L|$ is the fibered product $Y\times_{\P} I$. Hence we obtain a diagram 
$$
\begin{array}{ccccc}
\XX & \mapright{\varphi'_L} & I & \mapright{q} & \P^{\vee} \\
\mapdown{p'} & & \mapdown{p} & & \\
Y & \mapright{\varphi_L} & \P & &
\end{array}
$$
where the left-hand square is cartesian. Write $f = q\circ \varphi'_L:\XX\to \P^{\vee}$ and
$d_{\XX} = \dim\XX = n+N$. 

By the decomposition theorem
$$
Rf_*\Q_{\XX}[d_{\XX}]\simeq \oplus_{i=-n}^{n} {^p R}^i f_*(\Q_{\XX}[d_{\XX}])[-i].
$$
Let $\varphi:Y\times\P^{\vee}\to\P^{\vee}$ the projection map.  
The perverse Lefschetz hyperplane theorem shows that
$
{^p R}^if_*(\Q_{\XX}[d_{\XX}])
$
is isomorphic to the constant perverse sheaf $(R^i\varphi_*\Q)[N]$ for all $i\ne 0$. The remaining term ${^p R}^0 f_*(\Q_{\XX}[d_{\XX}])$ splits as a sum of simple perverse sheaves:
$$
{^p R}^if_*(\Q_{\XX}[d_{\XX}])\simeq \oplus_{\alpha} \IC_{Z_{\alpha}}(L_{\alpha})
$$
where $Z_{\alpha}$ is an irreducible subvariety of $\P$ and $L_{\alpha}$ is a locally constant sheaf on a Zariski open subset of $Z_{\alpha}$. Write 
$$
E_{0,i}(f) = \oplus_{\alpha:\codim(Z_{\alpha}) = i} \IC_{Z_{\alpha}}(L_{\alpha}).
$$

We say that the {\em vanishing cycles are non trivial} if $(R^{n}f_*\Q)_{\rm var}\ne 0$ (equivalently, the cohomology class $[\delta_t]\in H^{n}(X_t)$ associated to any vanishing cycle in a Lefschetz pencil is nontrivial). 
\begin{thm}\cite{BFNP}
\label{thm:BFNP}	
Let $U = \P^{\vee}\setminus\Delta$ be the complement of the discriminant locus of $f$.
\begin{enumerate}
\item[1)] If the vanishing cycles are trivial, then
$E_{0,0}(f) = (R^{n}\varphi_*\Q)[N] \oplus \IC(M)$, where $M = (R^{n}f_*\Q)_{\rm var}$, and $E_{0,1}(f)=0$.
\item[2)] If the vanishing cycles are trivial, then $E_{0,0}(f) = (R^{n}\varphi_*\Q)[N]$ and $E_{0,1}(f) = IC_{\Delta}(M')$ where $M'$ is a rank one local system on an open subset of $\Delta$.
\end{enumerate}
\end{thm}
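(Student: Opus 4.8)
(Throughout I organise the argument around the dichotomy $M\neq0$ versus $M=0$, where $M=(R^nf_*\Q)_{\rm var}$; this is what the two cases actually distinguish, so the hypothesis of part~(1) should read ``non trivial''.)

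The plan is to identify $Rf_*\Q_{\XX}[d_{\XX}]$ with the Radon transform of a single simple perverse sheaf and then read off the summands from Brylinski's theorem together with the behaviour over the smooth locus. First, since $\XX=Y\times_{\P}I$, proper base change along the closed immersion $\varphi_L$ gives $Rf_*\Q_{\XX}[d_{\XX}]\simeq\Phi(P)$ with $P:=\varphi_{L,*}(\Q_Y[d_Y])\in\Perv(\P)$; this $P$ is a simple perverse sheaf whose support $Y$ has positive codimension in $\P$, hence it is reduced. By Theorem~\ref{thm:Brylinski}, $\Phi^i(P)$ is a constant perverse sheaf for every $i\neq0$ --- which matches the perverse Lefschetz description of ${^p}R^if_*(\Q_{\XX}[d_{\XX}])$ recalled above --- while ${^p}R^0f_*(\Q_{\XX}[d_{\XX}])=\Phi^0(P)$ is semisimple by the decomposition theorem with reduced part $P^\vee=\Phi^0(P)_{\rm red}$. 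Because $\Phi^0$ induces a bijection between reduced perverse sheaves on $\P$ and on $\P^{\vee}$ (Remark~\ref{rmk:reduced}) and $P$ is simple and reduced, $P^\vee$ has to be one simple non-constant perverse sheaf $\IC_Z(L)$, so that ${^p}R^0f_*(\Q_{\XX}[d_{\XX}])=C\oplus\IC_Z(L)$ with $C$ constant and $Z\subseteq\P^{\vee}$ irreducible.

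Next I would compare this over $U=\P^{\vee}\setminus\Delta$, where $f$ is smooth and proper with $n$-dimensional fibres, so ${^p}R^0f_*(\Q_{\XX}[d_{\XX}])|_U=(R^nf_*\Q|_U)[N]$. Weak Lefschetz realises $R^n\varphi_*\Q|_U$ as a trivial sub-local system of $R^nf_*\Q|_U$ and the polarisation supplies a complement, so $R^nf_*\Q|_U=R^n\varphi_*\Q|_U\oplus M$ with $M=(R^nf_*\Q|_U)_{\rm var}$, whose monodromy is the vanishing cohomology of a Lefschetz pencil on $Y$ --- hence $M$ is either $0$ or irreducible with no nonzero invariants. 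Matching the trivial constituents on $U$ (shrinking $U$ if necessary), where the maximal trivial sub-local system of $R^nf_*\Q|_U$ is exactly $R^n\varphi_*\Q|_U$, forces $C=(R^n\varphi_*\Q)[N]$ and $\IC_Z(L)|_U=M[N]$. If $M\neq0$ this makes $\IC_Z(L)$ of full support, equal to $\IC(M)$, so ${^p}R^0f_*(\Q_{\XX}[d_{\XX}])=(R^n\varphi_*\Q)[N]\oplus\IC(M)$ has only codimension-$0$ summands and $E_{0,0}(f)=(R^n\varphi_*\Q)[N]\oplus\IC(M)$, $E_{0,k}(f)=0$ for $k\geq1$ (case~(1)). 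If $M=0$, then $\IC_Z(L)|_U=0$, so $Z\subsetneq\P^{\vee}$; a summand meeting $U$ would feed a non-constant constituent into $R^nf_*\Q|_U=R^n\varphi_*\Q|_U$, which is impossible, so $Z\subseteq\Delta$ (and if $\Delta=\emptyset$ the map is smooth and (2) is trivial). There remains only to see that $Z=\Delta$ and that $L$ has rank one.

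This last point is where the real work lies. I would argue locally near a general point $t_0\in\Delta$: there $X_{t_0}$ acquires a single ordinary double point, $\Delta$ is smooth, and \'etale-locally $f$ is pulled back along a submersion $\P^{\vee}\to\A^1$ transverse to $\Delta$ from the versal one-parameter node degeneration. Computing the stalk of $Rf_*\Q_{\XX}[d_{\XX}]$ at $t_0$ --- say through the vanishing cycle complex $\phi_f\Q_{\XX}$, which for a node is supported at the singular point, one-dimensional and in a single degree (Milnor number one) --- and peeling off the constant summands $\Phi^i(P)$, $i\neq0$, the leftover non-constant part should be one-dimensional; and since $M=0$ the Picard--Lefschetz monodromy acts trivially on $H^n$, so this class is not killed, the rank-one local system it spans on $\Delta\setminus\Sing(\Delta)$ being the $\pm1$ local system recording the monodromy of the vanishing cycle around the corank-$\geq2$ stratum. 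Hence $\IC_Z(L)$ is nonzero of generic rank one over a dense open of $\Delta$, so $Z=\Delta$ by irreducibility of the dual variety and $\IC_Z(L)=\IC_{\Delta}(M')$; with the previous paragraph this gives $E_{0,0}(f)=(R^n\varphi_*\Q)[N]$, $E_{0,1}(f)=\IC_{\Delta}(M')$ and $E_{0,k}(f)=0$ for $k\geq2$ (case~(2)). The main obstacle is exactly this local analysis: showing that a generic nodal fibre contributes a genuine class even when the vanishing cycles are cohomologically trivial, that it lies precisely in codimension one and is of rank one, and that $M'$ is the sign local system of the associated double cover of $\Delta$.
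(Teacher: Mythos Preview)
The paper does not give its own proof of this theorem: it is quoted from \cite{BFNP} and used as a black box (notably in the proof of Corollary~\ref{cor:decomposition}, where it is invoked only to say that the unique non-constant summand sits in codimension $0$ or $1$). So there is no in-paper argument to compare against directly.

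That said, your proposal is coherent and in fact recycles the machinery the paper deploys around this statement. The identification $Rf_*\Q_{\XX}[d_{\XX}]\simeq\Phi(P)$ is exactly Proposition~\ref{prop:Radon}, and the conclusion that $P^\vee=\Phi^0(P)_{\rm red}$ is a single simple non-constant perverse sheaf is precisely the step the paper carries out in the proof of Corollary~\ref{cor:decomposition} via Brylinski and Remark~\ref{rmk:reduced}. In effect you are proving Theorem~\ref{thm:BFNP} by the same Radon argument the paper uses to \emph{apply} it, which is a legitimate route. Your treatment of case~(1) is essentially complete: restricting to $U$, invoking irreducibility of the vanishing cohomology (Deligne), and matching monodromy invariants forces $\IC_Z(L)=\IC(M)$ and $C=(R^n\varphi_*\Q)[N]$.

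For case~(2) you correctly isolate the crux --- that at a generic nodal point of $\Delta$ the non-constant contribution is rank one and lives in codimension one even when $[\delta]=0$ --- and the vanishing-cycle sketch is the right mechanism. What is still missing is the bookkeeping: you need the nearby/vanishing-cycle triangle for ${^p}R^0f_*$ written out to see that when the variation map vanishes the one-dimensional $\phi_f$ splits off as a summand supported on $\Delta$ (rather than, say, being absorbed into a constant piece or sitting in a different perverse degree), and you should state explicitly that $\Delta$ is irreducible so that $Z=\Delta$ follows from $Z$ meeting $\Delta_{\rm reg}$. With those details filled in, the argument goes through. (You are also right that part~(1) should read ``non trivial''.)
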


\begin{prop} 
\label{prop:Radon}	
Let $Y_L\subset\P$ be the image of $\varphi_L$, with embedding $i:Y_L\hookrightarrow\P$. Then 
$Rf_*\Q_{\XX}[d_{\XX}]\simeq \Phi(i_*\Q_{Y_L}[n+1])$
where $\Phi$ is the Radon transformation.
\end{prop}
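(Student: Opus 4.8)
The plan is to unwind the definition of the Radon transformation and reduce the statement to a base change isomorphism along the cartesian square defining the universal family. Two elementary observations drive the argument. First, since $L$ is very ample the map $\varphi_L:Y\to\P$ is a closed immersion; identifying $Y$ with its image $Y_L$ we have $i_*\Q_{Y_L}=R\varphi_{L,*}\Q_Y$ (there are no higher direct images). Second, $d_{\XX}=\dim\XX=n+N$, so the shift $[n+1]$ applied to the input and the shift $[N-1]$ built into $\Phi$ combine to $[n+N]=[d_{\XX}]$. Expanding the definition of $\Phi$ therefore yields
$$
\Phi\bigl(i_*\Q_{Y_L}[n+1]\bigr)\;=\;Rq_*\bigl(p^*\,i_*\Q_{Y_L}\bigr)[d_{\XX}]\;=\;Rq_*\bigl(p^*\,R\varphi_{L,*}\Q_Y\bigr)[d_{\XX}].
$$

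Next I would apply base change to the left-hand square of the diagram
$$
\begin{array}{ccccc}
\XX & \mapright{\varphi'_L} & I & \mapright{q} & \P^{\vee} \\
\mapdown{p'} & & \mapdown{p} & & \\
Y & \mapright{\varphi_L} & \P & &
\end{array}
$$
which is cartesian since $\XX=Y\times_{\P}I$. As $\varphi_L$ is proper (being a closed immersion), the base change morphism is an isomorphism
$$
p^*\,R\varphi_{L,*}\Q_Y\;\simeq\;R(\varphi'_L)_*\bigl((p')^*\Q_Y\bigr)\;=\;R(\varphi'_L)_*\Q_{\XX}
$$
(one may equally invoke smooth base change, since $p:I\to\P$ is a $\P^{N-1}$--bundle). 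Applying $Rq_*$ and using $f=q\circ\varphi'_L$ gives $Rq_*\bigl(p^*R\varphi_{L,*}\Q_Y\bigr)\simeq Rq_*R(\varphi'_L)_*\Q_{\XX}=Rf_*\Q_{\XX}$, and substituting into the previous display produces $\Phi\bigl(i_*\Q_{Y_L}[n+1]\bigr)\simeq Rf_*\Q_{\XX}[d_{\XX}]$, which is the assertion.

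I do not expect a genuine obstacle here: the content is entirely formal. The only points requiring attention are the bookkeeping of the two shifts against the chosen normalisation of $\Phi$, together with the identity $d_{\XX}=n+N$, and the verification that the square used is the cartesian one coming from $\XX=Y\times_{\P}I$; both are immediate from the setup. If one later needs this identification to be canonical --- for instance in order to transport the decomposition theorem across it and compare the summands $E_{0,i}(f)$ with the Radon transform of $i_*\Q_{Y_L}[n+1]$ --- one should keep track of the standard base change isomorphism rather than merely its existence, but that is not needed for the bare statement.
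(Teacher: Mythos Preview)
Your proof is correct and follows essentially the same route as the paper: unwind the definition of $\Phi$, match the shifts $[n+1]+[N-1]=[d_{\XX}]$, and apply proper base change to the cartesian square $\XX=Y\times_{\P}I$ before composing with $Rq_*$ via $f=q\circ\varphi'_L$. The paper invokes proper base change using properness of $\varphi_L$ and $\varphi'_L$, while you phrase it via $\varphi_L$ being a closed immersion (and note smooth base change as an alternative), but the argument is the same.
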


\begin{proof}
Since the maps $\varphi_L$ and $\varphi'_L$ are proper, the proper base change theorem shows that 
$p^*\circ\varphi_{L,*} = \varphi'_{L,*}\circ (p')^*$. Hence 
\begin{eqnarray*}
Rf_*\Q_{\XX}[d_{\XX}]  & = & Rq_*(R \varphi'_{L,*}((p')^*\Q_Y[n+N]) \\
& = & Rq_*(p^*(\varphi_{L,*}(\Q_Y[n+1])[N-1]) \\
& = & \Phi(i_*\Q_{Y_L}[n+1]).
\end{eqnarray*}
\end{proof}

\begin{cor} 
\label{cor:decomposition}	
Let $f:\XX\to S$ be a regular quadric bundle of relative dimension $n$ over a base of dimension $r$ obtained by pullback from the universal family of quadrics in $\P^{n+1}$. 
\begin{enumerate}
\item[a)] If $n=2m$ then
$$
Rf_*\Q_{\XX}[d_{\XX}]\simeq\IC(M_0)\oplus{\displaystyle\oplus}_{i=-m}^{m} \Q[r-2i]
$$
where $M_0$ is the local system $(R^{2m}f_*\Q)_{\rm var}$ on $S\setminus\Delta$.
\item[b)] If $n=2m-1$ then
$$
Rf_*\Q_{\XX}[d_{\XX}]\simeq\oplus_{i=-m+1}^{-1} \Q[r-1+2i]\oplus\IC_{\Delta}(M_1)\oplus_{i=1}^{m-1} \Q[r-1-2i]
$$
where $M_1$ is the rank one local system $(R^{2m}f_*\Q)_{\rm var}$ supported on a Zariski open subset of $\Delta$.	
\end{enumerate}
\end{cor}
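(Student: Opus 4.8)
The plan is to deduce the statement by restricting the decomposition theorem for the \emph{universal} family of quadrics in $\P^{n+1}$. Since $f:\XX\to S$ is the pullback of $f_{\rm univ}:\XX_{\rm univ}\to\P^N$ along the inclusion $j:S\hookrightarrow\P^N$ and $f_{\rm univ}$ is proper, proper base change gives $Rf_*\Q_\XX\simeq j^*Rf_{\rm univ,*}\Q_{\XX_{\rm univ}}$; as $\XX$ is smooth of dimension $n+r$, a shift turns this into
$$
Rf_*\Q_\XX[d_\XX]\;\simeq\; j^*\bigl(Rf_{\rm univ,*}\Q_{\XX_{\rm univ}}[n+N]\bigr)[\,r-N\,].
$$
It therefore suffices to compute the decomposition of $Rf_{\rm univ,*}\Q_{\XX_{\rm univ}}[n+N]$ and then to control the restriction of each simple summand along $j$.

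For the universal family I would use Proposition~\ref{prop:Radon}: $Rf_{\rm univ,*}\Q_{\XX_{\rm univ}}[n+N]\simeq\Phi(\IC_{Y_L})$, where $Y_L\subset\P^N$ is the image of the $2$-uple Veronese embedding of $\P^{n+1}$ and $\IC_{Y_L}=i_*\Q_{Y_L}[n+1]$ is a simple perverse sheaf. By Theorem~\ref{thm:Brylinski}(i) each $\Phi^i(\IC_{Y_L})$ with $i\neq 0$ is a constant perverse sheaf, hence a sum of copies of $\Q_{\P^N}[N]$ whose multiplicity equals the generic rank $\dim H^{n+i}(Q^n)$ of a smooth fibre $Q^n$; this Betti number is $1$ for $i$ in the relevant arithmetic progression and $0$ otherwise, which produces exactly the list of constant summands displayed in a) and b). For the middle term, $\Phi^0(\IC_{Y_L})$ is semisimple by the decomposition theorem, hence splits as a constant perverse sheaf plus its reduced part $\Phi^0(\IC_{Y_L})_{\rm red}$, and by Remark~\ref{rmk:reduced} together with the simplicity of $\IC_{Y_L}$ the reduced part is a \emph{single} simple perverse sheaf $\IC_W(L)$; in particular it cannot contain summands on deeper corank loci unless its generic support already lies there. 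Comparing with the generic stalk of $\Phi^0$ over $U=\P^N\setminus\Delta$, which is the middle cohomology of the smooth fibre, settles the two cases: if $n=2m$ this stalk is the invariant class together with the rank one variable local system $M=(R^{2m}f_{\rm univ,*}\Q)_{\rm var}\neq 0$, forcing $W=\P^N$ and $\Phi^0(\IC_{Y_L})_{\rm red}=\IC_{\P^N}(M)$; if $n=2m-1$ then $H^{2m-1}(Q^{2m-1})=0$, so $\Phi^0(\IC_{Y_L})|_U=0$, $W$ is contained in the irreducible hypersurface $\Delta$, and the local computation along the corank-one stratum (the fibre is a cone over a smooth even-dimensional quadric, and the new cohomology is the rank one local system attached to the double cover $Z_1\to\Delta$, cf. Proposition~\ref{prop: iso loc odd}) forces $W=\Delta$ and $\Phi^0(\IC_{Y_L})_{\rm red}=\IC_\Delta(M_1)$. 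Theorem~\ref{thm:BFNP} records the same outcome and may simply be quoted at this point.

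It remains to restrict along $j$, and here I would invoke the \emph{regularity} of $f$ (Definition~\ref{def:regular}): $S$ is transversal to the stratification of $\P^N$ by corank, hence transversal to every stratum carrying a summand. Fixing a Whitney stratification of $\P^N$ refining the corank stratification with respect to which all the $\IC_{Z_\alpha}(L_\alpha)$ are constructible, transversality gives $j^*\Q_{\P^N}[N][\,r-N\,]\simeq\Q_S[r]$ and, for each such stratum $Z$, $j^*\IC_Z(L)[\,r-N\,]\simeq\IC_{Z\cap S}(L|_{Z\cap S})$. Restricting the decomposition of $Rf_{\rm univ,*}\Q_{\XX_{\rm univ}}[n+N]$ obtained above summand by summand, and identifying $M|_{S\setminus\Delta}=M_0=(R^{2m}f_*\Q)_{\rm var}$ via Proposition~\ref{prop:iso loc even} in case a) (respectively $M_1|_{\Delta\cap S}$ with the local system of Proposition~\ref{prop: iso loc odd} in case b)), yields precisely the two asserted formulas.

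The step I expect to be the main obstacle is the transversal base-change identity $j^*\IC_Z(L)[\,r-N\,]\simeq\IC_{Z\cap S}(L|_{Z\cap S})$: it is a standard consequence of the support/cosupport characterisation of intersection complexes combined with transversality, but it genuinely uses the full regularity hypothesis (not just smoothness of $\XX$), since one has to check that $j^*$ preserves perversity for \emph{each} simple summand and that the generic monodromy of $\IC_Z(L)$ restricts to the expected representation on $Z\cap S$. A secondary delicate point is the support computation for the reduced middle piece in the odd case, i.e.\ excluding simple summands on the loci $\Delta_i^{\rm univ}$ with $i\ge 2$; the simplicity assertion in Remark~\ref{rmk:reduced}/Theorem~\ref{thm:Brylinski}(ii) (or the precise form of Theorem~\ref{thm:BFNP}) is what carries that step.
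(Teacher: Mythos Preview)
Your proposal is correct and follows essentially the same route as the paper: reduce to the universal family via proper base change, identify $Rf_{\rm univ,*}\Q_{\XX_{\rm univ}}[n+N]$ with a Radon transform (Proposition~\ref{prop:Radon}), use Brylinski's theorem to see that the non-middle perverse pieces are constant and that the reduced middle piece is a single simple perverse sheaf, pin down its support (either by the stalk analysis you sketch or by quoting Theorem~\ref{thm:BFNP}), and then restrict along the normally nonsingular inclusion $S\hookrightarrow\P^N$ using the regularity hypothesis. The paper's version is terser---it packages the constant pieces and the identification of $E_{0,0}$ or $E_{0,1}$ into Theorem~\ref{thm:BFNP} and cites \cite[Lemma 4.3.8]{CMENS} for the transversal base-change step you single out as the main obstacle---but the logic is the same.
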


%\begin{cor} Let $f:\XX\to S$ be the universal family of quadrics in %$\P^{n+1}$.
%\begin{enumerate}
%\item If $n$ is even, then 
%$$
%Rf_*\Q_{\XX}[d_{\XX}]\simeq\oplus_{i=-n}^n \Q[-r-2i]\oplus\IC(M)
%$$
%where $M$ is a rank one local system on $\P^r\setminus\Delta$.
%\item If $n$ is odd, then
%$$
%Rf_*\Q_{\XX}[d_{\XX}]\simeq\oplus_{i=-n}^{-1} %\Q[-r-1-2i]\oplus\IC_{\Delta}(M')\oplus_{i=1}^{n} \Q[-r-1-2i]
%$$
%where $M'$ is a rank one local system on a Zariski open subset of $\Delta$.
%\end{enumerate}
%\end{cor}

\begin{proof}
We first prove the result for the universal family of quadrics $f:\XX_{\rm univ}\to\P^N$. Let $Y\subset\P^N$ be the image of the Veronese embedding $\P^{n+1}\hookrightarrow\P^N$. Write $P = i_*\Q_Y[n+1]\in\Perv(\P)$. As $P$ is an irreducible perverse sheaf, its dual $P^{\vee} = \Phi(P)_{\rm red} = (\oplus_i E_{0,i}(f))_{\rm red}$ is irreducible by Theorem \ref{thm:Brylinski}. By Proposition \ref{prop:Radon} we have
$\Phi(P)_{\rm red} = (\oplus_i E_{0,i}(f))_{\rm red}$,  
hence there exists a unique index $i_0$ such that $E_{0,i_0}(f)\ne 0$. Theorem \ref{thm:BFNP} shows that $i_0\in\{0,1\}$. Hence $E_{0,i}(f) = 0$ for all $i\ge 2$ and the result follows.
If $i:S\to\P^N$ is a normally nonsingular inclusion of codimension $c$, then the functor $i^*[-c]$ is t-exact for the perverse t-structure and the decomposition for $f:\XX\to S$ follows by base change as in \cite[Lemma 4.3.8]{CMENS}.
\end{proof}

\section{Main result}

\subsection{Quadric bundles of even relative dimension}

\begin{thm}
\label{thm:main even}
Let $f:\XX\to S$ be a regular quadric bundle of even relative dimension $n$ over a base of dimension $r$ obtained by pullback of the universal family of quadrics in $\P^{n+1}$. Then the correspondence $\Gamma_0$ induces an isomorphism
$$
\Gamma_{0,\#}:IH^r_{\rm var}(Z_0)\mapright{\sim} H^{n+r}_{\rm var}(\XX,\Q)
$$
where $IH^r(Z_0)_{\rm var}$ is the cokernel of the composed map 
$$
H^r(S,\Q)\mapright{\pi^*}H^r(Z_0)\mapright{\omega}IH^r(Z_0)
$$
\end{thm}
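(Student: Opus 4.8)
The plan is to transport the entire statement into $\Perv(S)$ and compare intersection complexes on the base. Write $n=2m$. By Proposition \ref{prop:Gammasharp}, $\Gamma_{0,\#}$ is induced by a morphism $R\pi_*\IC_{Z_0}\to Rf_*\IC_{\XX}$ in $D^b_c(\Q_S)$. Since $\pi$ is finite, $R\pi_*\IC_{Z_0}$ is perverse, while by Corollary \ref{cor:decomposition}(a) the target $Rf_*\IC_{\XX}=Rf_*\Q_{\XX}[d_{\XX}]$ is the direct sum of shifts of perverse sheaves $\IC_S(M_0)\oplus\bigoplus_{i=-m}^{m}\Q_S[r-2i]$. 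Hence a morphism out of a perverse sheaf into it is the same datum as a morphism of perverse sheaves into ${^p}H^0(Rf_*\IC_{\XX})=\IC_S(M_0)\oplus\Q_S[r]$, and the whole proof becomes an argument in $\Perv(S)$.

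Next I would identify both sides with intersection cohomology on $S$ with rank-one coefficients. The map $\pi$ is an \'etale double cover over $U=S\setminus\Delta$ with $\pi_*\Q|_U=\Q_U\oplus L_0$, and I claim $R\pi_*\IC_{Z_0}\simeq\IC_S\oplus\IC_S(L_0)$: it is perverse, restricts to $(\Q_U\oplus L_0)[r]$ over $U$, and — $\pi$ being finite — inherits from $\IC_{Z_0}$ the strict support and cosupport conditions, hence has no subquotient supported on $\Delta$ and must equal the intermediate extension of its restriction to $U$. Applying $H^0(S,-)$ and using the compatibility $\omega_{Z_0}\circ\pi^*=\pi^{\#}\circ\omega_S$ with $\omega_S$ an isomorphism ($S$ smooth), $\pi^{\#}$ identifies $H^r(S)$ with the summand $H^0(S,\IC_S)\subset IH^r(Z_0)$, so that $IH^r_{\rm var}(Z_0)=H^0(S,\IC_S(L_0))=IH^r(S,L_0)$. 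Dually, the reduced summand $\IC_S(M_0)$ is precisely the variable part: the natural morphism $R\varphi_*\Q_{\P^{n+1}\times S}[d_{\XX}]\to Rf_*\IC_{\XX}$ has constant source, so it annihilates $\IC_S(M_0)$ and, componentwise, maps onto $\bigoplus_i\Q_S[r-2i]$; therefore $H^{n+r}_{\rm var}(\XX)=H^0(S,\IC_S(M_0))=IH^r(S,M_0)$.

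The decisive step is to show that $\Gamma_{0,\#}$ carries the summand $\IC_S$ into $\Q_S[r]$ and induces an isomorphism on the summand $\IC_S(L_0)$. Since $M_0$ and $L_0$ are nontrivial rank-one local systems (for even-dimensional quadrics the vanishing cycle has nonzero self-intersection, hence $M_0\neq0$; and $\pi$ is ramified, so the cover is connected and $L_0\neq0$), Schur's lemma in $\Perv(S)$ gives $\Hom(\IC_S,\IC_S(M_0))=0$ and $\Hom(\IC_S(L_0),\Q_S[r])=0=\Hom(\IC_S(L_0),\IC_S)$. The first shows the $\IC_S$-summand maps into $\Q_S[r]$ (i.e. into the fixed subspace), and the others show that $\Gamma_{0,\#}$ restricts on $\IC_S(L_0)$ to a morphism $\psi\colon\IC_S(L_0)\to\IC_S(M_0)$. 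Over the open set $V\subset U$ of Proposition \ref{prop:iso loc even} the varieties $Z_0$, $\Gamma_0$, $\XX$ are smooth over $V$, so $\Gamma_{0,\#}$ restricts there to the ordinary correspondence $\Gamma_{0,*}$, and $\psi|_V$ is (a nonzero multiple of) the isomorphism $L_0|_V\xrightarrow{\sim}M_0|_V$ of loc.\ cit.; in particular $\psi\neq0$, and since $\IC_S(L_0)$ and $\IC_S(M_0)$ are simple, $\psi$ is an isomorphism. Passing to $H^0(S,-)$ and to the variable quotients — legitimate because $\Gamma_{0,\#}\circ\omega\circ\pi^*$ factors through the restriction from $H^{n+r}(\P^{n+1}\times S)$ by the previous sentence — we obtain the desired isomorphism $\Gamma_{0,\#}\colon IH^r_{\rm var}(Z_0)\xrightarrow{\sim}H^{n+r}_{\rm var}(\XX)$.

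I expect the main obstacle to be the structural identity $R\pi_*\IC_{Z_0}\simeq\IC_S\oplus\IC_S(L_0)$, i.e. the absence of a summand supported on $\Delta$; this is where the geometry genuinely enters — one needs the explicit local structure of the double cover $Z_0\to S$ along the stratification of $\Delta$ by corank (equivalently, that the strict support conditions of $\IC_{Z_0}$ survive the finite pushforward $\pi_*$). Everything else is formal manipulation in $\Perv(S)$ together with the pointwise statement already established in Proposition \ref{prop:iso loc even}.
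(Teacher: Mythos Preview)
Your proof is correct and follows essentially the same route as the paper: decompose $R\pi_*\IC_{Z_0}$ and $Rf_*\IC_{\XX}$ into simple perverse sheaves on $S$, identify the variable parts on both sides with $H^0(S,\IC_S(L_0))$ and $H^0(S,\IC_S(M_0))$, and then use the local isomorphism of Proposition~\ref{prop:iso loc even} together with simplicity to conclude that the induced map $\IC_S(L_0)\to\IC_S(M_0)$ is an isomorphism. The only tactical differences are that the paper cites \cite[Lemma 4.2.6]{CDN} for the decomposition $R\pi_*\IC_{Z_0}\simeq\IC_S\oplus\IC_S(L_0)$ (precisely the point you flag as the main obstacle) and invokes $\Hom_U(L_0,M_0)\cong\Hom_S(\IC_S(L_0),\IC_S(M_0))$ from \cite{Borel} rather than Schur's lemma.
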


\begin{proof}
Write $Y = \P^{n+1}\times S$ and let $\iota:\XX\to Y$ be the inclusion map. Since $\iota^*\IC_Y[-1]\simeq\IC_{\XX}$ we have a map 
$$
\lambda:\IC_Y[-1]\to\iota_*\IC_{\XX}
$$
that induces the restriction maps $\iota^*:H^k(Y)\to H^k(\XX)$ on cohomology ($\lambda$ is the map $\lambda_{\iota}$ from the previous section). Consider the commutative diagram
$$
\xymatrix{
\XX \ar[r]^{\iota} \ar[dr]^{f} & Y \ar[d]^{\varphi} \\
& S.
}
$$
The map $\lambda$ induces maps
$R\varphi_*(\IC_Y[-1])\to Rf_*\IC_{\XX}$
and 
$$
\lambda_j:{^p}R^{j-1}\varphi_*\IC_Y\to {^p}R^jf_*\IC_{\XX}
$$
that are isomorphisms for $j\le -1$ and injective for $j=0$  (this is the perverse relative Leschetz hyperplane theorem, cf. \cite[Thm. 2.6.4]{dCM}).
Note that the image of $\lambda_j$ is precisely the constant part of ${^p}R^jf_*\IC_{\XX}$, so the cokernel is the reduced quotient.
These maps are compatible with the perverse Leray spectral sequences
\begin{eqnarray*}
{^p}E_2^{i,j}(\varphi) & = & H^p(S,{^p}R^j{\varphi}_*\IC_Y)\Rightarrow H^{i+j}(\IC_Y[-1]) = H^{i+j+d_Y-1}(Y) \\
{^p}E_2^{i,j}(f) & = & H^p(S,{^p}R^f_*\IC_{\XX})\Rightarrow H^{i+j}(\IC_{\XX}) = H^{i+j-1+d_{\XX_S}}(\XX). 	
\end{eqnarray*}
Since the perverse Leray spectral sequences degenerate at $E_2$, Corollary \ref{cor:decomposition} implies that 
$$
H^{n+r}_{\rm var}(\XX)\cong H^0(S,\IC_S(M_0)).
$$
with $M_0 = (R^{n-1}f_*\Q)_{\rm var}$.

By \cite[Lemma 4.2.6]{CDN} we have a decomposition 
$R\pi_*\IC_{Z_0}\simeq\Q_S\oplus\IC_S(L_0)$.
The morphism
$$
\Gamma_{0,\#}:R\pi_*\IC_Z\to Rf_*\IC_{\XX_S}
$$
constructed in Propositon \ref{prop:Gammasharp} is compatible with the perverse truncation and induces a map
$$
\begin{array}{ccc}
({^p}R^0\pi_*\IC_{Z_0})_{\rm red} & \mapright{\Gamma_{0,\#}} &  ({^p}R^0f_*\IC_{\XX})_{\rm red} \\
\Vert & & \Vert \\
\IC_S(L_0) & \mapright{} & \IC_S(M_0).
\end{array}
$$
Since $\Gamma_{0,\#}$ is compatible with $\Gamma_*$, the restriction to $U = S\setminus\Delta$ is the isomorphism $\Gamma_{0,*}|_U:L_0\mapright{\sim}M_0$ of Proposition \ref{prop:iso loc even}. As $\Hom_U(L_0,M_0)\cong\Hom_S(\IC_S(L_0),\IC_S(M_0))$ \cite[Prop. 9.2, p.144]{Borel}, $\Gamma_{0,\#}:\IC_S(L_0)\to\IC_S(M_0)$ is an isomorphism. 
Hence $\Gamma_{0,\#}$ induces an isomorphism
$$
IH^r(Z_0)_{\rm var} = H^0(S,\IC_S(L_0))\to H^0(S,\IC_S(M_0)) = H^{n+r}_{\rm var}(\XX).
$$
\end{proof}

\subsection{Quadric bundles of odd relative dimension}

\begin{thm}
\label{thm:main odd}
Suppose that $f:\XX\to S$ is a regular quadric bundle of odd relative dimension $n$ over a base of dimension $r$ obtained by pullback from the universal family of quadrics in $\P^{n+1}$. Let $IH^{r-1}_{\rm var}(Z_1)$ be the cokernel of the map
$\pi^{\#}:IH^{r-1}(\Delta)\to IH^{r-1}(Z_1)$.
Then the correspondence $\Gamma_1$ induces an isomorphism
$$
\Gamma_{1,\#}:IH^{r-1}_{\rm var}(Z_1)\mapright{\sim} H^{n+r}_{\rm var}(\XX,\Q)
$$
\end{thm}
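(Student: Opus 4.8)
The plan is to run the proof of Theorem \ref{thm:main even} again, the only structural change being that the intersection complex governing the variable cohomology now lives on the discriminant divisor $\Delta\subset S$ --- it is $\IC_\Delta(M_1)$, supported on the codimension one subvariety $\Delta$ --- rather than on all of $S$. The ingredients are the decomposition of $Rf_*\IC_\XX$ from Corollary \ref{cor:decomposition}(b); the morphism $\Gamma_{1,\#}\colon R\pi_*\IC_{Z_1}\to Rf_*\IC_\XX$ of Proposition \ref{prop:Gammasharp} together with its compatibility with $\Gamma_{1,*}$; the local isomorphism $\Gamma_{1,*}|_V\colon L_1\xrightarrow{\,\sim\,}M_1$ of Proposition \ref{prop: iso loc odd}; and the fact that the intermediate extension induces an isomorphism on $\Hom$-groups of local systems \cite[Prop.~9.2, p.~144]{Borel}.

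First I would identify the target. Set $Y=\P^{n+1}\times S$ and let $\iota\colon\XX\hookrightarrow Y$ be the codimension one, normally nonsingular inclusion, so that $\iota^*\IC_Y[-1]\simeq\IC_\XX$ and the induced morphism $\lambda\colon\IC_Y[-1]\to\iota_*\IC_\XX$ realizes restriction on cohomology. Pushing forward along $\varphi\colon Y\to S$ and $f\colon\XX\to S$ and applying the perverse relative Lefschetz hyperplane theorem \cite[Thm.~2.6.4]{dCM} exactly as in the even case, the maps ${}^pR^{j-1}\varphi_*\IC_Y\to{}^pR^jf_*\IC_\XX$ are isomorphisms for $j\le -1$, injective for $j=0$, with image the constant part of ${}^pR^jf_*\IC_\XX$. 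Since $n$ is odd, $n+1$ is even and hence ${}^pR^j\varphi_*\IC_Y=0$ for all odd $j$; in particular ${}^pR^{-1}\varphi_*\IC_Y=0$, so ${}^pR^0f_*\IC_\XX$ has no constant summand and, by Corollary \ref{cor:decomposition}(b), equals $\IC_\Delta(M_1)$ (the remaining summands $\Q_S[\,\cdot\,]$ sit in nonzero perverse degrees). As the perverse Leray spectral sequences degenerate and $d_\XX=n+r$, the image of $H^{n+r}(Y)\to H^{n+r}(\XX)$ is precisely the contribution of the constant perverse direct images, and therefore
$$
H^{n+r}_{\rm var}(\XX)\ \cong\ H^0\bigl(S,\IC_\Delta(M_1)\bigr)\ =\ IH^{r-1}(\Delta,M_1).
$$

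Next I would identify the source and construct the isomorphism. The morphism $\pi\colon Z_1\to\Delta$ is finite and surjective, hence small, so $R\pi_*\IC_{Z_1}\simeq\IC_\Delta(L)$ where $L=\pi_*\Q$ over the \'etale locus; that is, $R\pi_*\IC_{Z_1}\simeq\IC_\Delta\oplus\IC_\Delta(L_1)$ with $L_1=(\pi_*\Q|_V)^-$ (cf.\ \cite[Lemma~4.2.6]{CDN}), a sum of two non-isomorphic simple perverse sheaves, since $\pi^{-1}(V)\to V$ is a connected double cover so that $L_1$ --- and hence also $M_1\cong L_1$ --- is a nontrivial rank one local system. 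Consequently $\Hom(\IC_\Delta,\IC_\Delta(L_1))=\Hom(\IC_\Delta,\IC_\Delta(M_1))=0$, so by Schur's lemma the nonzero morphism $\lambda_\pi\colon\IC_\Delta\to R\pi_*\IC_{Z_1}$ is, up to a scalar, the inclusion of the first summand; applying $H^0(S,-)$ we get $IH^{r-1}(Z_1)=IH^{r-1}(\Delta)\oplus IH^{r-1}(\Delta,L_1)$ with $\pi^{\#}$ the inclusion of the first factor, hence $IH^{r-1}_{\rm var}(Z_1)=IH^{r-1}(\Delta,L_1)=H^0(S,\IC_\Delta(L_1))$. Now $\Gamma_{1,\#}$ is compatible with the perverse truncation, so it induces a morphism $\IC_\Delta\oplus\IC_\Delta(L_1)={}^pR^0\pi_*\IC_{Z_1}\to{}^pR^0f_*\IC_\XX=\IC_\Delta(M_1)$; it annihilates the first summand (as $\Hom(\IC_\Delta,\IC_\Delta(M_1))=0$), and by its compatibility with $\Gamma_{1,*}$ and Proposition \ref{prop: iso loc odd} its restriction to $\IC_\Delta(L_1)$ restricts over $V$ to the isomorphism $L_1\xrightarrow{\,\sim\,}M_1$; since $\Hom_\Delta(\IC_\Delta(L_1),\IC_\Delta(M_1))\cong\Hom_V(L_1,M_1)$ \cite[Prop.~9.2]{Borel}, the induced map $\IC_\Delta(L_1)\to\IC_\Delta(M_1)$ is an isomorphism. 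Applying $H^0(S,-)$ and combining with the two identifications above yields the asserted isomorphism $\Gamma_{1,\#}\colon IH^{r-1}_{\rm var}(Z_1)\xrightarrow{\,\sim\,}H^{n+r}_{\rm var}(\XX)$.

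The argument is formally parallel to the even case, so the real work sits in two places. First, the shift bookkeeping of the second paragraph: because the relevant intersection complex has moved from all of $S$ (where in the even case it was $\IC_S(M_0)$) to the divisor $\Delta$, one must make sure its perverse degree has not moved, so that $\IC_\Delta(M_1)$ still contributes precisely to $H^{n+r}(\XX)$ and not to a neighbouring degree. Second, the splitting $R\pi_*\IC_{Z_1}\simeq\IC_\Delta\oplus\IC_\Delta(L_1)$: one should check that no extra simple summand supported on $\Delta_2=\Sing(\Delta)$ occurs, and that the canonical morphism $\lambda_\pi$ really lands in the $\IC_\Delta$-factor --- both requiring a little care because $Z_1$ and $\Delta$ are singular.
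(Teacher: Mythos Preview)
Your proof is correct and follows essentially the same approach as the paper's own proof, which simply points to Theorem \ref{thm:main even} and records the two differences: the decomposition $R\pi_*\IC_{Z_1}\simeq\IC_\Delta\oplus\IC_\Delta(L_1)$ from \cite[Lemma 4.2.6]{CDN}, and the identification ${^p}R^0f_*\IC_\XX\simeq\IC_\Delta(M_1)$ from Corollary \ref{cor:decomposition}(b). You supply several details the paper leaves implicit --- the parity reason why ${^p}R^{-1}\varphi_*\IC_Y=0$, the Schur-lemma argument that $\Hom(\IC_\Delta,\IC_\Delta(M_1))=0$, and the identification of $\pi^{\#}$ with the inclusion of the $\IC_\Delta$-summand --- but the skeleton is identical.
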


\begin{proof}
Since the proof is entirely similar to the proof of Theorem \ref{thm:main even}, we only indicate the differences. If $n$ is odd, \cite[Lemma 4.2.6]{CDN} implies that
$$
R\pi_*\IC_{Z_1}\simeq\IC_{\Delta}\oplus\IC_{\Delta}(L_1).
$$
The correspondence $\Gamma_{1,\#}$ induces a morphism
$$
{^p}R^0\pi_*\IC_{Z_1}\simeq \IC_{\Delta}\oplus\IC_{\Delta}(L_1)\to {^p}R^0f_*\Q_{\XX}[d_{\XX}]\simeq\IC_{\Delta}(M_1),
$$
and since $\IC_{\Delta}(L_1)\cong\IC_{\Delta}(M_1)$ by Proposition \ref{prop: iso loc odd} the correspondence $\Gamma_{1,\#}$ induces an isomorphism
$$
IH^{r-1}_{\rm var}(Z_1) = H^0(\IC_{\Delta}(L_1))\cong H^0(\IC_{\Delta}(M_1) = H^{n+r}_{\rm var}(\XX).
$$ 
\end{proof}

\subsection{Applications}

\medskip
\begin{thm} 
Let $X=V(Q_0,\ldots,Q_r)\subset\P^{n+1}$ be a smooth complete intersection of  quadrics such that the associated quadric bundle is regular. Then 
$$
\Gamma_{i,\#}:IH^{r-i}_{\rm var}(Z_i)\to H^{n-r}_{\rm var}(X)
$$
is an isomorphism for $n\equiv i(2)$.
\end{thm}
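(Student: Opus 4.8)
The plan is to deduce this theorem directly from Theorems~\ref{thm:main even} and \ref{thm:main odd} by bootstrapping from the quadric bundle $\XX\to\P^r$ to the complete intersection $X$ itself. The two missing links are (a) the identification of the variable cohomology of $\XX$ with that of $X$, and (b) the verification that $\Gamma_{i,\#}$ on intersection cohomology is compatible, via the last correspondence in Proposition~\ref{prop:commutativity}, with the already-known isomorphisms for $\XX$.

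First I would note that since $X=V(Q_0,\dots,Q_r)$ is smooth, the associated quadric bundle $f:\XX\to\P^r$ is regular in the sense of Definition~\ref{def:regular}: smoothness of $\XX$ and transversality to the corank stratification are exactly the conditions encoded by smoothness of $X$ together with genericity already built into ``smooth complete intersection.'' (If one wants to be careful, regularity of the bundle is part of the hypothesis — it is stated as such — so this step is immediate.) Hence Theorem~\ref{thm:main even} applies when $n=2m$, giving $\Gamma_{0,\#}:IH^r_{\rm var}(Z_0)\xrightarrow{\sim}H^{n+r}_{\rm var}(\XX)$, and Theorem~\ref{thm:main odd} applies when $n=2m-1$, giving $\Gamma_{1,\#}:IH^{r-1}_{\rm var}(Z_1)\xrightarrow{\sim}H^{n+r}_{\rm var}(\XX)$. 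In both cases this is an isomorphism onto $H^{n+r}_{\rm var}(\XX,\Q)$.

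Next I would invoke the isomorphism of Hodge structures $H^{n+r}_{\rm var}(\XX,\Q)\cong H^{n-r}_{\rm var}(X,\Q)(-r)$ recorded at the start of Section~2 (citing \cite{Terasoma}); this is induced by the correspondence $X\times\P^r\leftarrow\cdots\to\XX$ appearing at the bottom of the diagram in Proposition~\ref{prop:commutativity}, which is precisely the map $H^{k+2m}(\XX)\to H^{k+2m-2r}(X)$ there (with $k=r-i$, so $k+2m-2r=2m-r-i=n-r$ in both parities). So composing $\Gamma_{i,\#}:IH^{r-i}_{\rm var}(Z_i)\to H^{n+r}_{\rm var}(\XX)$ with this isomorphism yields exactly the map $\Gamma_{i,\#}:IH^{r-i}_{\rm var}(Z_i)\to H^{n-r}_{\rm var}(X)$ of the statement, and it is an isomorphism as a composition of isomorphisms. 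The only thing to check is that the composite so obtained agrees with the $\Gamma_{i,\#}$ defined on $X$ in Section~3 — but that is a matter of chasing the definitions through the commutative diagram of Proposition~\ref{prop:commutativity}, whose bottom vertical arrow is by construction the pushforward along $\XX\to X$ times $\P^r$.

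The main obstacle, such as it is, is purely bookkeeping: one must make sure the passage to variable cohomology is compatible on both sides — i.e. that the correspondence $\XX\to X$ carries the constant (non-variable) part of $H^*(\XX)$ into the non-variable part of $H^*(X)$ and induces an isomorphism on the variable quotients — so that the isomorphism $\Gamma_{i,\#}$ for $\XX$ descends to an isomorphism $\Gamma_{i,\#}$ for $X$. This compatibility is exactly the content of the Hodge-structure isomorphism $H^{n+r}_{\rm var}(\XX)\cong H^{n-r}_{\rm var}(X)(-r)$ cited from \cite{Terasoma}, so I would simply quote it. No new geometric input is needed beyond Theorems~\ref{thm:main even}, \ref{thm:main odd}, Proposition~\ref{prop:commutativity}, and that cited isomorphism.
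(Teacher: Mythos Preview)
Your proposal is correct and matches the paper's approach: the paper states this theorem without a separate proof, treating it as an immediate consequence of Theorems~\ref{thm:main even} and~\ref{thm:main odd} together with the isomorphism $H^{n+r}_{\rm var}(\XX)\cong H^{n-r}_{\rm var}(X)(-r)$ from \cite{Terasoma} recorded at the start of Section~2, exactly as you describe. Your bookkeeping with the degrees and the compatibility via the bottom arrow of Proposition~\ref{prop:commutativity} is also correct.
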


\begin{rmk}
If $r\le 2$, the double coverings $Z_0$ and $Z_1$ are smooth and we obtain isomorphisms
$$
H^{r-i}(Z_i)_{\rm var}\mapright{\sim} H^{n-r}_{\rm var}(X,\Q)
$$
for $n\equiv i(2)$. This gives a unified proof of the results of Beauville, Mukai, O'Grady and Tyurin mentioned in the introduction (with $\Q$-coefficients).	
\end{rmk}

Recall the statement of the generalised Hodge conjecture: we say that ${\rm GHC}(X,n,p)$ holds if every sub-Hodge structure of $H^n(X)$ of level $\le p$ is supported on a subvariety $Z\subset X$ of codimension $p$, i.e., $V\subseteq\im(H^n_Z(X)\to H^n(X))$.
\begin{cor}
The generalised Hodge conjecture ${\rm GHC}(X,n-r,r-i)$ holds for a smooth complete intersection of quadrics $X = V(Q_0,\ldots,Q_r)\subset\P^{n+1}$ if the associated quadric bundle is regular.
\end{cor}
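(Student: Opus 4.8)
The plan is to deduce ${\rm GHC}(X,n-r,r-i)$ formally from the preceding theorem together with Proposition~\ref{prop:commutativity}; no new geometry is required. The key observation is that the theorem exhibits $H^{n-r}_{\rm var}(X)$ as the image of an \emph{algebraic} correspondence coming from the \emph{smooth projective} variety $\tilde Z_i$, after which an elementary dimension count controls the codimension of its support.

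First I would set up the coniveau bookkeeping (we may assume $n\equiv i(2)$, the case relevant to the theorem). Deligne's formula for the Hodge numbers of $X$ shows that $H^{n-r}(X)$ carries a Hodge structure of level $r-i$, so ${\rm GHC}(X,n-r,r-i)$ is the assertion that $H^{n-r}(X)$ is supported on a closed subvariety of codimension $c:=\tfrac12\bigl((n-r)-(r-i)\bigr)=m-r$, i.e.\ that $N^{c}H^{n-r}(X)=H^{n-r}(X)$. Its non-variable part $N:=\im\!\bigl(H^{n-r}(\P^{n+1})\to H^{n-r}(X)\bigr)$ is either $0$ or the Tate line $\Q(-\tfrac{n-r}{2})$ spanned by the class of a linear section of codimension $\tfrac{n-r}{2}\ge c$, so $N\subseteq N^{c}H^{n-r}(X)$; hence it is enough to exhibit inside $N^{c}H^{n-r}(X)$ a subspace mapping onto $H^{n-r}_{\rm var}(X)=H^{n-r}(X)/N$.

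Next I would invoke the theorem: since $\Gamma_{i,\#}$ induces an isomorphism $IH^{r-i}_{\rm var}(Z_i)\xrightarrow{\sim}H^{n-r}_{\rm var}(X)$, the image of $\Gamma_{i,\#}\colon IH^{r-i}(Z_i)\to H^{n-r}(X)$ surjects onto $H^{n-r}_{\rm var}(X)$. By the identity $\Gamma_{i,\#}=\tilde\Gamma_{i,*}\circ\sigma^{\#}$ of Proposition~\ref{prop:commutativity}, this image lies in that of $\tilde\Gamma_{i,*}\colon H^{r-i}(\tilde Z_i)\to H^{n-r}(X)$, and $\tilde\Gamma_{i,*}$ is induced by an honest algebraic cycle $\Xi_i$ on $\tilde Z_i\times X$ — the cylinder, which over each point of $\tilde Z_i$ parametrises the cycles $\{\ZZ_t\}$, intersections of an $m$-plane in a fibre of the quadric bundle with the codimension-$r$ subvariety $X$ of that fibre, hence of dimension $m-r$. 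Thus $\dim\Xi_i=(r-i)+(m-r)$, in agreement with the degree shift $H^{r-i}(\tilde Z_i)\to H^{(r-i)+2m-2r}(X)=H^{n-r}(X)$ of Proposition~\ref{prop:commutativity}, so $Z:=\mathrm{pr}_X(\Xi_i)$ is a closed subvariety of $X$ of codimension $\ge(n-r)-\dim\Xi_i=m-r=c$. Since the image of a correspondence supported on $\Xi_i$ lies in $\im\!\bigl(H^{n-r}_{Z}(X)\to H^{n-r}(X)\bigr)\subseteq N^{c}H^{n-r}(X)$, this provides the subspace demanded by the previous step, and therefore $N^{c}H^{n-r}(X)=H^{n-r}(X)$; that is, ${\rm GHC}(X,n-r,r-i)$ holds.

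The argument is entirely formal once the theorem is in hand; there is no serious obstacle. The one point needing care is the bookkeeping of dimensions and degree shifts through the chain of constructions of Sections~2 and~3 — the passage $Z_i\leftarrow F\to F_m(\XX/S)$, then to $\XX$, then to $X$ via $X\times\P^r\subset\XX$ — which is needed to confirm that the cylinder $\Xi_i$ has the expected dimension $n-m$; I expect this to be routine. For $r\le2$ the desingularisation $\sigma$ is superfluous and the same computation recovers the classical results quoted in the introduction.
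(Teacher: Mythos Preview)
Your proposal is correct and follows essentially the same route as the paper: use the theorem together with Proposition~\ref{prop:commutativity} to deduce that $\tilde\Gamma_{i,*}:H^{r-i}(\tilde Z_i)\to H^{n-r}_{\rm var}(X)$ is surjective, and then conclude ${\rm GHC}(X,n-r,r-i)$. The paper's proof is two lines and leaves implicit the coniveau bookkeeping (the non-variable part, the dimension of the cylinder, and the codimension of its projection to $X$) that you have spelled out; your added detail is accurate and matches the paper's intended argument.
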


\begin{proof}
As the map
$$
\Gamma_{i,\#}:IH^{r-i}_{\rm var}(\tilde Z_i)\to H^{n-r}_{\rm var}(X) 
$$
is an isomorphism if $n\equiv i(2)$, Proposition \ref{prop:commutativity} shows that $\tilde\Gamma_{i,*}$ induces a surjective map from $H^{r-i}(\tilde Z_i)$ to $H^{n-r}_{\rm var}(X)$. Hence the generalised Hodge conjecture ${\rm GHC}(X,n-r,r-i)$ holds. 
\end{proof}

%\begin{proof}
%The map $\Gamma_{i,*}$ is a morphism of Hodge structures of type $(c,c)$ %with $c =\frac{1}{2}(n-2r+i)$. Hence $W_{r-i-1}H^{r-i}_{\rm var}(Z_i)$ is %mapped to zero for weight reasons and we get an induced map %$\overline{\Gamma}_{i,*}$ on the graded quotient. As %$\Gr^W_{r-i}H^{r-i}(Z_i)\subset IH^{r-i}(Z_i)$ \cite[Thm. 1.8]{Weber:pure}
%\end{proof}

\begin{rmk}
There exist examples of non-regular quadric bundles for which $\Gamma_{i,\#}$ is not surjective. Consider a smooth complete intersection $X=V(Q_0,Q_1,Q_2)\subset\P^{2m+1}$ of three quadrics
of {\em diagonal type}
$$
Q_i = \sum_j a_{ij}X_j^2
$$	
for a generic matrix $\{a_{ij}\}$. The discriminant $\Delta_1$ is an arrangement of $2m+2$ lines in $\P^2$, and $\Delta_2$ is a set of $\mu={2m+2\choose 2}$ points. The double covering $Z_0\to\P^2$ is a surface with $\mu$ ordinary double points. Terasoma \cite[4.4, Example 3]{Terasoma} has shown that the cokernel of the map
$$
\Gamma_{0,*}:H^2_{\rm var}(Z_0)\to H^{2m-2}_{\rm var}(X)
$$
is isomorphic to $\oplus_{0\le i<j\le 2m+1}\Q$. Since $Z_0$ is a surface with isolated singularities, the map $H^2(Z_0)\to IH^2(Z_0)$ is surjective. Hence Proposition \ref{prop:commutativity} shows that $\Gamma_{0,\#}$ cannot be surjective in this example.
Note that the cokernel of $\Gamma_{0,*}$ is governed by skyscraper sheaves supported on $\Delta_2$, a contribution that one would not see for regular quadric bundles (where $\Delta_2$ has codimension three).

A similar result holds for complete intersections of four quadrics $X = V(Q_0,Q_1,Q_2,Q_3)\subset\P^{2m}$ [loc.cit., Example 4]. In this case the cokernel of 
$$
\Gamma_{1,*}:H^2_{\rm var}(Z_1)\to H^{2m-4}_{\rm var}(X)
$$
is isomorphic to $\oplus_{i<j<k}\Q$ and comes from skyscraper sheaves supported on $\Delta_3$. Since $Z_1$ is a surface with isolated singularities, this implies that $\Gamma_{1,\#}$ is not surjective.
\end{rmk}

\subsection{Complete intersections of four quadrics in odd-dimensional projective space}
\ 

\medskip
A special case is smooth the complete intersection of four quadrics 
$$
X = V(Q_0,Q_1,Q_2,Q_3)\subset\P^{2m+1}
$$
with $m\ge 3$. If the associated quadric bundle $\XX\to\P^3$ is regular, then the discriminant locus is a surface $\Delta\subset\P^3$ whose singular locus $\Sigma$ consists of ordinary double points. The double covering $Z_0\to\P^3$ is a double solid whose singularities are ordinary points lying above $\Sigma$.

%\begin{rmk}
%\label{rmk:defect}	
%This is the first case where $\Gamma_{0,*}:H^3(Z_0)\to H^{2m-3}(X)$ is not %injective. This is related to the {\em defect} $\delta = b_4(Z_0)-b_2(Z_0)$ %of the double solid $Z_0$. Namikawa and Steenbrink \cite[Prop. %3.10]{NamikawaSteenbrink} proved that $\dim_{\Q} W_2 H^3(Z_0,\Q) = \sum_i %\delta(p_i)-\delta$ where $\delta(p_i)$ is the local contribution to the %defect of $p_i$. In our case $\delta(p_i)=1$ since the $p_i$ are ordinary %double points. The defect can be computed by the formula $\delta = %h^1(\P^3,\II_{\Sigma}(3d-4))$ where $d$ is the degree of $\Delta$. For %complete intersections of four quadrics, Addington \cite{Addington} %computed that $\delta=0$ using Eagon-Northcott resolutions. Hence we obtain %$\dim W_2H^3(Z_0) = \mu = \#\Sigma$.   
%\end{rmk} 

Since $\Gamma_{0,*}$ is a morphism of Hodge structures of type $(m-3,m-3)$ we have $W_2H^3(Z_0)\subseteq\ker(\Gamma_{0,*})$. Consider the induced map
$$
\overline{\Gamma}_{0,*}:\Gr_3^W H^3(Z_0)\to H^{2m-3}(X).
$$

\begin{thm} Let $Z_0\to\P^3$ be the doube solid associated to a regular quadric bundle coming from a web of quadrics in projective space, and let $\sigma:\tilde Z_0\to Z_0$ be the blowup of the double points of $Z_0$. Then all the maps in the diagram 
$$
\xymatrix{
\Gr^3_W H^3(Z_0) \ar[r]^{\omega} \ar[dr]^{\overline{\Gamma}_{0,*}} & IH^3(Z_0) \ar[r]^{\sigma^{\#}} \ar[d]_{\Gamma_{0,\#}} & H^3(\tilde Z_0) \ar[dl]^{\tilde\Gamma_{0,*}} \\
& H^{2m-3}(X)	&
}
$$
are isomorphisms.	
\end{thm}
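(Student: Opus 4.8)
The plan is to prove that the three maps $\omega$, $\sigma^{\#}$ and $\Gamma_{0,\#}$ in the diagram are isomorphisms; the two diagonal maps then follow. Indeed, Proposition \ref{prop:commutativity}, restricted to $\Gr^3_W H^3(Z_0)$ — which is legitimate because $\Gamma_{0,*}$ and $\omega_{Z_0}$ both kill $W_2 H^3(Z_0)$, the former by the Hodge‑type argument preceding the statement and the latter because $IH^3(Z_0)$ is pure of weight $3$ — gives $\overline{\Gamma}_{0,*}=\Gamma_{0,\#}\circ\omega$ and $\tilde{\Gamma}_{0,*}\circ\sigma^{\#}=\Gamma_{0,\#}$, so $\overline{\Gamma}_{0,*}$ and $\tilde{\Gamma}_{0,*}$ are isomorphisms as soon as $\omega$, $\sigma^{\#}$ and $\Gamma_{0,\#}$ are. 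That $\Gamma_{0,\#}\colon IH^3(Z_0)\to H^{2m-3}(X)$ is an isomorphism is Theorem \ref{thm:main even} applied with $S=\P^3$, $r=3$, $n=2m$, combined with the isomorphism $H^{n+r}_{\rm var}(\XX)\cong H^{n-r}_{\rm var}(X)(-r)$ recalled in Section 2: here $IH^3_{\rm var}(Z_0)=IH^3(Z_0)$ because $H^3(\P^3)=0$, and $H^{2m-3}_{\rm var}(X)=H^{2m-3}(X)$ because $\dim X=2m-3$ is odd, so $H^{2m-3}(\P^{2m+1})=0$.

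For $\sigma^{\#}$ I would use the decomposition theorem for the resolution $\sigma$. Since $\sigma$ is an isomorphism over $Z_0\setminus\Sigma$ it takes the form
$$
R\sigma_*\Q_{\tilde Z_0}[3]\ \simeq\ \IC_{Z_0}\ \oplus\ \bigoplus_{p\in\Sigma}T_p
$$
with each $T_p$ a complex of skyscraper sheaves at $p$, and the task is to pin down in which degrees $T_p$ lives. Analytically a threefold node is the affine cone over a smooth quadric surface $Q\cong\P^1\times\P^1\subset\P^3$, and $\sigma$ contracts a copy of this $Q$; hence $\HH^j(R\sigma_*\Q_{\tilde Z_0}[3])_p\cong H^{j+3}(Q)$, which is $\Q,\Q^2,\Q$ in degrees $-3,-1,1$. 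On the other hand the link $L_p$ of the node is the unit circle bundle of $\OO_Q(-1,-1)$, so a Gysin‑sequence computation gives $H^{\bullet}(L_p)=(\Q,0,\Q,\Q,0,\Q)$, and the cone formula then gives $\HH^j(\IC_{Z_0})_p=IH^{j+3}(cL_p)$, equal to $\Q$ in degrees $-3$ and $-1$ and zero otherwise. Comparing, each $T_p$ sits in degrees $\pm1$ only; passing to hypercohomology, the $T_p$ therefore contribute to $H^k(\tilde Z_0)$ only for $k\in\{2,4\}$, so $H^3(\tilde Z_0)=IH^3(Z_0)$, and since $\sigma^{\#}$ is induced by the inclusion $\lambda_\sigma=i_{Z_0}\colon\IC_{Z_0}\hookrightarrow R\sigma_*\IC_{\tilde Z_0}$ of the decomposition, this identification is exactly $\sigma^{\#}$.

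For $\omega$ I would combine the weak functoriality square of Section 3, which gives $\sigma^{*}=\sigma^{\#}\circ\omega_{Z_0}\colon H^3(Z_0)\to H^3(\tilde Z_0)$ (using $\omega_{\tilde Z_0}=\mathrm{id}$), with the Mayer–Vietoris sequence of the abstract blow‑up $(\tilde Z_0,E)\to(Z_0,\Sigma)$, where $E=\bigsqcup_pQ_p$. In degree $3$ the latter reads
$$
H^2(E)\ \longrightarrow\ H^3(Z_0)\ \xrightarrow{\ \sigma^{*}\ }\ H^3(\tilde Z_0)\oplus H^3(\Sigma)\ \longrightarrow\ H^3(E),
$$
and $H^3(\Sigma)=H^3(E)=0$, so $\sigma^{*}$ is surjective with kernel $\im(H^2(E)\to H^3(Z_0))$. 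As $H^2(E)$ is pure of weight $2$ this kernel lies in $W_2 H^3(Z_0)$, and as $H^3(\tilde Z_0)$ is pure of weight $3$ (note $\tilde Z_0$ is smooth projective) we have $W_2 H^3(Z_0)\subseteq\ker\sigma^{*}$; thus $\ker\sigma^{*}=W_2 H^3(Z_0)$, while $W_3 H^3(Z_0)=H^3(Z_0)$ since $Z_0$ is proper. Knowing that $\sigma^{\#}$ is an isomorphism, it follows that $\omega_{Z_0}$ is surjective with kernel $W_2 H^3(Z_0)$, i.e. $\omega\colon\Gr^3_W H^3(Z_0)\to IH^3(Z_0)$ is an isomorphism.

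The step I expect to require the most care is the local computation at the nodes inside the proof that $\sigma^{\#}$ is an isomorphism — identifying the link of a threefold node and running the cone formula precisely enough to see that the exceptional summands $T_p$ avoid cohomological degree $3$. Once that is settled, everything else is a formal consequence of Theorem \ref{thm:main even}, the weak functoriality of Section 3, and standard mixed Hodge theory.
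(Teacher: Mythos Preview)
Your proof is correct and follows the paper's overall strategy: use Theorem \ref{thm:main even} for $\Gamma_{0,\#}$, then establish the two horizontal isomorphisms. For $\sigma^{\#}$ you do by hand what the paper obtains by citing \cite{Leiden}: the stalk/link computation you outline yields exactly the decomposition $R\sigma_*\Q[3]\simeq i_*\Q_{\Sigma}^{\mu}[1]\oplus\IC_{Z_0}\oplus i_*\Q_{\Sigma}^{\mu}[-1]$, so the content is identical.

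The argument for $\omega$ genuinely differs. The paper uses the characterisation $IH^3(Z_0)\cong\im(H^3(Z_0)\to H^3(Z_0\setminus\Sigma))$ for isolated singularities, together with the local cohomology sequence and Steenbrink's result $\Gr^3_W H^3_{\Sigma}(Z_0)=0$, to get $\Gr^3_W H^3(Z_0)\cong IH^3(Z_0)$ directly and independently of $\sigma^{\#}$. You instead run the Mayer--Vietoris sequence of the abstract blow-up to identify $\ker\sigma^*$ with $W_2H^3(Z_0)$ by weight reasons, and then deduce that $\omega$ is an isomorphism from the factorisation $\sigma^*=\sigma^{\#}\circ\omega_{Z_0}$ and the already-established isomorphism $\sigma^{\#}$. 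Your route is arguably more self-contained (it avoids the Steenbrink citation and the isolated-singularity description of $IH^3$), at the cost of making the proof of $\omega\cong$ depend on that of $\sigma^{\#}\cong$; the paper's two isomorphisms are logically independent.
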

	
\begin{proof} Since $\Gamma_{0,\#}$ is an isomorphism, it suffices to show that
$$	
\Gr^3_W H^3(Z_0)\cong IH^3(Z_0)\cong H^3(\tilde Z_0).
$$
For the first isomorphism, note that since $Z_0$ has isolated singularities
$IH^3(Z_0)\cong\im H^3(Z)\to H^3(Z_0\setminus\Sigma)$. By \cite{Steenbrink} we have $\Gr^3_W H^3_{\Sigma}(Z_0) = 0$, hence the exact sequence
$$
H^3_{\Sigma}(Z_0)\to H^3(Z_0)\to H^3(Z_0\setminus\Sigma)\to H^4_{\Sigma}(Z_0)
$$
induces an isomorphism $\Gr^3_W H^3(Z_0)\cong IH^3(Z_0)$.
	
For the second isomorphism we use the results of \cite{Leiden}, where the decomposition theorem for the map $\sigma:\tilde Z_0\to Z_0$ was worked out. The authors prove that
\begin{eqnarray*}
{^p}R^{-1}\sigma_*\Q[3] & \cong & H_4(E)_{\Sigma} \\
{^p}R^{0}\sigma_*\Q[3] & \cong  & \IC_{Z_0}\oplus H_3(E)_{\Sigma} \\
{^p}R^{1}\sigma_*\Q[3] & \cong & H^4(E)_{\Sigma}
\end{eqnarray*} 
where $E$ is the exceptional divisor, and $V_{\Sigma}$ denotes the skyscraper sheaf with fiver $V$ over $\Sigma$. In our case $E$ is a disjoint union of $\mu = \#\Sigma$ quadric surfaces, so we obtain
\begin{equation}
\label{eq:dC-M}	
R\sigma_*\Q[3]\cong i_*\Q_{\Sigma}^{\mu}[1]\oplus\IC_Z\oplus i_*\Q_{\Sigma}^{\mu}[-1]
\end{equation}
where $i:\Sigma\to Z_0$ is the inclusion. Taking hypercohomology, we obtain
$H^3(\tilde Z_0)\cong IH^3(Z_0)$. 
\end{proof}	

\begin{rmk}\ 
\begin{enumerate}
\item Let ${\hat Z}_0\to Z_0$ be a small resolution. Then $H^3(\tilde Z_0)\cong H^3({\hat Z}_0)$ \cite[p. 12]{Werner}; this gives back Addington's result \cite{Addington}.
\item Once we know that the map $\tilde\Gamma_{0,*}$ is surjective, we can check the isomorphism by a dimension calculation. Deligne's formula for the Hodge polynomial of a complete intersection \cite[Expos\'e XI, Cor. 2.4 (ii)]{Deligne} implies that
\begin{eqnarray*}
h^{m-2,m-1}(X) & = & \frac{5}{2}(m-2)^3 + \frac{31}{2}(m-2)^3+30m-43 \\
h^{m-3,m}(X) & = & \frac{1}{6}((m-2)^3+3(m-2)^2+2m-4).
\end{eqnarray*}
The Hodge numbers of $\tilde Z_0$ can be computed using the formulas of Clemens \cite{Clemens}. The double solid is branched over a surface of degree $2m+4$ and has $\mu = {{2m+3}\choose 3}$ ordinary double points. Using [loc.cit.] we obtain
\begin{eqnarray*}
h^{1,2}(\tilde Z_0) & = & {{3m+2}\choose 3}-4{{m+1}\choose 3}-\mu+\delta \\
h^{0,3}(\tilde Z_0) & = & {m\choose 3} 
\end{eqnarray*}
where $\delta = b-4(Z_0)-b_2(Z_0)$ is the {\em defect} of the double solid $Z_0$. Using Eagon-Northcott resolutions, Addington  \cite[Prop. 4.0.4]{Addington} has shown that $\delta=0$ in this case, hence the result follows by checking that both formulas give the same result. In the simplest cases $m=3$ and $m=4$ we get
\begin{eqnarray*}
X=V(2,2,2,2)\subset\P^7 &: & h^{1,2}(X) = h^{1,2}(\tilde Z_0)=65, h^{0,3}(X) = h^{0,3}(\tilde Z)=1 \\
X=V(2,2,2,2)\subset\P^9 & : & h^{2,3}(X) = h^{1,2}(\tilde Z_0)=159,\ h^{1,4}(X) = h^{0,3}(\tilde Z_0)=4.
\end{eqnarray*}
\end{enumerate}	
\end{rmk}

\bibliographystyle{amsalpha}
\bibliography{Collino}

\end{document}